\PassOptionsToPackage{compatV3}{fancyhdr}
\documentclass{article}

\def\isarxiv{1} 
\ifdefined\isarxiv
  \usepackage[letterpaper,textwidth=5.5in,textheight=9in]{geometry}
  \usepackage{times}
  \usepackage[colorlinks,citecolor=blue,linkcolor=blue,urlcolor=blue]{hyperref}
  \makeatletter
  \def\@maketitle{\vbox{\hsize\textwidth\centering
    {\LARGE\sc \@title\par}
    \vskip 0.3in minus 0.1in
    {\def\And{\end{tabular}\hfil\linebreak[0]\hfil
              \begin{tabular}[t]{c}\bf\rule{\z@}{24pt}\ignorespaces}
     \def\AND{\end{tabular}\hfil\linebreak[4]\hfil
              \begin{tabular}[t]{c}\bf\rule{\z@}{24pt}\ignorespaces}
     \begin{tabular}[t]{c}\bf\rule{\z@}{24pt}\@author\end{tabular}\par}
    \vskip 0.3in minus 0.1in}}
  \makeatother
  \usepackage{fancyhdr}
  \fancypagestyle{plain}{
    \fancyhf{}
    \fancyhead[L]{Preprint}
    \fancyfoot[C]{\thepage}
    }
\else
  
\fi
\usepackage{graphicx}

\usepackage{amsmath,amssymb}
\usepackage{natbib}
\usepackage{bm}
\usepackage{dsfont}
\usepackage{amsthm}
\usepackage{makecell}
\usepackage{algorithm}
\usepackage{algorithmic}
\usepackage{xcolor}
\usepackage{booktabs}
\usepackage{enumitem}

\usepackage{multirow}
\usepackage{makecell}
\usepackage{amsmath,amssymb,amsthm}
\usepackage{mathtools}
\usepackage{svg}
\usepackage[T1]{fontenc}

\usepackage{caption}
\newcommand{\M}{\mathcal{M}}
\newcommand{\X}{\mathcal{X}}
\newcommand{\Sopt}{\mathcal{S}}
\newcommand{\I}{\mathcal{I}}
\newcommand{\D}{\mathcal{D}}
\newcommand{\R}{\mathbb{R}}

\newcommand{\1}{\mathds{1}}
\newcommand{\E}{\mathbb{E}}
\renewcommand{\L}{\mathcal{L}}
\newcommand{\frameworkName}{TACIT }

\DeclareMathOperator*{\argmin}{arg\,min}

\newcommand{\e}{\varepsilon}

\newcommand{\blue}[1]{{\color{blue} #1}}
\definecolor{darkgreen}{RGB}{0,150,0}
\newcommand{\green}[1]{{\color{darkgreen} #1}}

\theoremstyle{plain}
\newtheorem{theorem}{Theorem}

\newtheorem{definition}{Definition}
\newtheorem{example}{Example}
\usepackage{fvextra}
\usepackage[most]{tcolorbox}
\newtcolorbox{promptbox}[1]{enhanced jigsaw, breakable, toprule at break=0pt, bottomrule at break=0pt, colback=gray!4, colframe=gray!50, boxrule=0.4pt, arc=1pt, left=4pt, right=4pt, fonttitle=\small\bfseries, coltitle=black, colbacktitle=gray!15, title=#1}
\fvset{fontsize=\scriptsize, breaklines=true, breakanywhere=true}

\title{TACIT: Optimization Models that Learn from Their Mistakes}

\author{
  Maxime Bouscary$^{1}$\thanks{This work was performed during an internship at Microsoft Research.} \quad Marco Molinaro$^{2}$ \quad Sirui Li$^{2}$ \\
  \textbf{Saurabh Amin$^{1}$ \quad Ishai Menache$^{2}$ \quad Konstantina Mellou$^{2}$} \\
  $^{1}$Massachusetts Institute of Technology, Cambridge, MA, USA \\
  $^{2}$Microsoft Research, Redmond, WA, USA \\
  {\small\texttt{\{mbscry,amins\}@mit.edu}}\\
  {\small\texttt{\{mmolinaro,siruili,ishai,kmellou\}@microsoft.com}}
}

\begin{document}

\maketitle

\begin{abstract}
Real-world optimization problems are difficult to model accurately because many objectives and constraints reside in domain experts' tacit knowledge, making them hard to formalize. As a result, optimization models often contain miscalibrated objectives, missing constraints, or omitted decision variables, leading to solutions that fail to reflect operational realities. We address this challenge by automatically repairing misspecified formulations using historical data consisting of past solutions and subsequent user overrides. Traditional approaches such as inverse optimization and constraint learning tend to overfit sparse data and produce complex formulations. Our central idea is to combine the reasoning capabilities and prior knowledge of LLMs with the formal grounding provided by optimization. We realize this idea through two complementary paradigms. Top-down, an LLM proposes structural repairs, including new constraints and variables, whose numerical parameters are calibrated and validated through optimization. Bottom-up, optimization infers cuts from observed decisions, which the LLM contextualizes into interpretable, generalizable modeling constraints. We evaluate our approach on 38 misspecification scenarios spanning nine classes of optimization problems, several drawn from real-world applications, and show that \frameworkName can repair 78.9\% of them (vs.~60.5\% for the best baseline).
\end{abstract}

\section{Introduction}

Mathematical optimization is a powerful tool for decision-making, enabling organizations to make complex decisions at a scale and speed that would be difficult to achieve manually. As a result, it plays a central role in automating decision processes across applications such as supply chain and logistics, scheduling, resource allocation, and infrastructure planning. However, an optimization system is only as good as the formulation on which it is built. Constructing such a formulation requires translating a real-world decision problem into decision variables, constraints, and objectives that faithfully capture how decisions should be made in practice. 

This translation is inherently challenging. It typically requires close collaboration between domain experts, who understand the operational requirements and preferences governing decisions, and optimization experts, who encode this knowledge into a mathematical model. Much of this knowledge, however, is tacit and may not be fully articulated during model development. Consequently, optimization formulations can contain misaligned objectives, missing constraints, or omitted variables, producing solutions that fail to reflect operational realities \citep{baxi2026online}.

Large Language Models (LLMs) are increasingly being used to automate parts of the formulation process by translating natural-language problem descriptions into mathematical optimization models~\citep{OptiMUS,astorga2024autoformulation,zhang2024solving}. While recent advances have substantially improved formulation accuracy, they do not eliminate the fundamental challenge of specifying the right decision problem. Requirements absent from the problem description cannot readily be reflected in the resulting formulation, and LLMs may introduce additional formulation errors of their own. Thus, LLM-generated formulations may also be incomplete or misaligned with the decisions users intend to make.

When misaligned optimization systems are deployed in practice, deficiencies in the underlying formulation become visible through their decisions. Recommendations that violate implicit operational requirements or fail to capture user preferences may be rejected, manually overridden, or repeatedly adjusted through tedious processes. Addressing such failures thus often requires costly iterations between domain and optimization experts to diagnose the source of the mismatch and revise the formulation. Yet, these interactions also generate valuable information: rejected recommendations and user corrections 
provide signals about which aspects of the formulation are incomplete or misaligned. This motivates our central question:
\begin{center}
\textit{Can we recover the formulation that governs users' decisions from their corrections, as an explicit and interpretable repair of the deployed one?}
\end{center}

To illustrate the difficulty, consider a toy example with a manufacturer choosing the number of motors ($x$) to produce and machines ($y$) to assemble, where each machine requires one motor. Suppose the formulation mistakenly omits this dependency and captures only production capacity and handling constraints: 
$\max \left\{2x+5y : 2x+3y\le 10,\; x+y\le 4,\; x,y\in\mathbb{Z}_{\ge 0}\right\}$.
Its optimal solution, $(0,3)$, assembles three machines without producing any motors. The user corrects it to $(2,2)$, which is optimal under the intended constraint $x \geq y$. However, this correction does not uniquely reveal the missing constraint: many constraints could make $(2,2)$ optimal, 
while a similar result could be achieved by just changing the objective coefficients. Recovering $x \geq y$ thus requires understanding the semantic relationship between the variables, highlighting the difficulty of repairing optimization models from solution feedback alone.

In practice, users typically provide multiple feedback points as they repeatedly solve instances of the same underlying formulation, but this challenge grows significantly for real-world formulations with a substantial number of variables and constraints, often spanning distinct semantic families and different aspects of the modeled system. More fundamentally, the formulation itself may be incomplete: entire variables may be missing, including auxiliary variables used to represent nonlinear objectives or logical relationships (see Example~\ref{ex:VRP}). Repair may therefore require recovering not only constraints, but missing pieces of the modeling structure itself.

This toy example exposes three broader challenges that make formulation repair fundamentally hard:

\vspace{-0.15cm}

\begin{itemize}[leftmargin=8pt]
    \item \textbf{Indirect feedback.} In practice, users such as supply chain planners or business operators typically focus on correcting the proposed solution rather than diagnosing the underlying model error. 

     \item \textbf{Non-identifiability.} A corrected solution rarely pins down the intended model: the same feedback may be explained by different constraints, objectives, or structural changes, as in the example above. With few corrections, local fixes can explain without capturing the underlying rule.
    
    \item \textbf{Interpretability.} A repair should not merely reproduce observed decisions through an unnatural or unnecessarily complex formulation: it should have a clear real-world interpretation that users can inspect and validate. Recent work highlights the challenges practitioners face in interpreting and modifying optimization models, as well as the broader importance of aligning algorithmic systems with human judgment and organizational objectives~\citep{chen2026optichat,caro2026humanalgorithm}.
\end{itemize}

\vspace{-0.15cm}

\textbf{Our contribution.} We introduce \textsc{TACIT} (Templates And Cuts for Interpretable Transformations), an end-to-end framework for automatically repairing optimization formulations from feedback. We cast repair as a search over modifications of the original formulation: adding or changing constraints, variables, and objective terms. This space is vast and underdetermined: many formulations can be consistent with any finite set of corrections, but few are meaningful to a modeler. Our central idea is to combine the reasoning capabilities and prior knowledge of LLMs with the formal grounding provided by optimization. 
We realize this idea through two complementary paradigms that run within a single search loop. \emph{Bottom-up}, optimization derives cuts that separate rejected solutions from accepted ones, and an LLM agent interprets, refines, and generalizes these cuts into meaningful modeling rules. \emph{Top-down}, an LLM agent identifies a potentially missing modeling component and expresses it as a template, possibly introducing new variables, constraints, or unspecified parameters, which optimization then specifies from the feedback.

This hybrid architecture provides several advantages:
\vspace{-0.15cm}

\begin{enumerate}[leftmargin=12pt]
    \item \textbf{Broad coverage of misspecifications.} Our framework supports a substantially broader class of repairs than local changes to coefficients or individual constraints, recovering missing, extraneous, or miscalibrated components in the objective, constraints, and variables. A single repair can introduce auxiliary variables, add or remove constraints, and modify the objective jointly.

    \item \textbf{Domain generality.} The framework is not tailored to a specific application domain (e.g., transportation, scheduling) and applies across structurally diverse optimization models.

     \item \textbf{No human in the loop.} The framework operates from sparse, offline signals, namely user-corrected solutions, without querying users or requiring them to diagnose the model error.

    \item \textbf{Interpretability and generalization.} By searching over semantically meaningful modeling changes, the framework promotes interpretability while leveraging limited feedback to recover modeling rules that generalize beyond observed data.
\end{enumerate}

\vspace{-0.15cm}

Finally, we construct a comprehensive benchmark of 38 misspecification scenarios spanning nine classes of optimization problems, including several real-world applications, with errors involving constraints, variables, parameters, and objective components. 
Across these settings, \textsc{TACIT} can recover the ground-truth formulation, up to decision equivalence, in 78.9\% of the scenarios from limited user feedback, against 60.5\% for the best baseline. 
The recovered formulations generalize to unseen instances, and our ablations show that combining LLM reasoning with optimization is critical. At the same time, both top-down and bottom-up inference improve repair performance independently, but their combination achieves the best overall performance. Overall, our experiments show that sparse solution feedback can be sufficient to repair complex formulation misspecifications without requiring users to diagnose the formulation themselves.

\section{Related Work} \label{sec:related}

\textbf{Data-driven model customization and preference modeling.} The closest works to ours~\citep{hewitt2020data,bayani2024learning,abdellaoui2026implicit} correct optimization models from prescribed and implemented solutions by learning a function that models directly how users modify the solutions. These approaches range from affine mappings~\citep{hewitt2020data} to regression and decision trees~\citep{bayani2024learning}, and richer predictors for routing and scheduling~\citep{abdellaoui2026implicit}. Related work learns user preferences within an optimization problem~\citep{maragno2025constraint}; see also~\citet{fajemisin2021survey}. There, an ML model modeling user behavior or preference is embedded within the optimization formulation and is trained based on the optimization outcomes. While these data-driven customization methods can substantially improve decisions, the learned behavioral or preference models are encoded within the formulation using additional variables and constraints that do not possess real-world meaning. In contrast, we recover an \emph{explicit, corrected formulation} that can be inspected and validated by humans. Moreover, some complex formulation errors cannot be corrected by such learned-function embeddings, as shown in Theorem~\ref{thm:embed} (Appendix~\ref{app:embed}); our framework can recover the appropriate corrections even in these complex cases. 

\textbf{Inverse optimization and learning optimization models.} Inverse optimization estimates unknown model parameters from observed decisions, including under multiple or noisy observations and imperfect information~\citep{chan2025inverse,moghaddass2021multiple,aswani2018noisy,mohajerin2018imperfect,bulut2021complexity}. Recent methods also jointly learn objectives and feasible regions of LPs or MILPs from contextual examples~\citep{tan2020learning,kumar2021learning,ren2025feasible,kitaoka2025inverse}. This literature motivates our numerical calibration components and highlights the identifiability challenges of decision-only supervision. Moreover, our setting additionally requires recovering the \emph{semantic and structural content} of a misspecified formulation. In addition, these methods cannot naturally capture more complex structural corrections that require introducing new variables and coordinating them with new constraints or objective terms. Our framework addresses both limitations by using LLMs to inject semantic knowledge into the repair process and to propose structural changes involving previously absent variables.

\textbf{Language models for optimization modeling and repair.} LLMs have been used to formulate optimization models from natural-language specifications and diagnose or repair them using solver feedback and domain-specific checks~\citep{OptiMUS,chainofexperts2024,astorga2024autoformulation,zhang2024solving,thind2025optimai,llmopt2025,chen2024diagnosing,ao2026optirepair,ao2026solver}; see~\citet{xiao2025survey} for a recent survey. This literature is complementary to our setting: autoformulation methods assume intended requirements are expressed in the specification, while diagnostic repair relies on explicit specifications, infeasibility, or predefined checks. We instead address requirements that are \emph{missing from the formulation and its specification}, inferring them from downstream behavioral evidence and combining LLM semantic priors with optimization-based validation.

\section{Problem definition}
\label{sec:problem}

\paragraph{Core concepts.}
An instance is described by a data vector $d$ (e.g., demands, capacities, availabilities), whose dimension may vary across instances. We consider mixed-integer linear programs (MILPs), with linear programs (LPs) as a special case. A \emph{formulation} is a pair $\M = (c, \X)$ mapping instance data $d$ to an optimization problem $\M(d)$: $\min c(d)^\top y$ s.t. $y \in \X(d)$,
where $c(d)$ is the vector of objective coefficients and $\X(d)$ the feasibility set, defined by linear constraints and, when applicable, integrality requirements. We use $\Sopt(d; \M)$ to denote its set of optimal solutions. 
An \emph{initial formulation} $\M_0=(c_0,\X_0)$ is given but may not fully capture the underlying decision problem. There is an unknown \emph{target formulation} $\M^\star = (c^\star, \X^\star)$ that is the ground truth that governs the decisions actually taken. The two may differ in the objective (missing terms or incorrect weights), constraints (missing, miscalibrated, or extraneous constraints), variables (e.g., auxiliary variables present only in $\M^\star$), or any combination thereof. 

\begin{example}[Vehicle routing problem (VRP)] \label{ex:VRP} A company operates a fleet $\mathcal L$ of vehicles and must serve a set of customers $C$, with $d_{ij}$ denoting the distance between customers $i$ and $j$. Let variable $x_{ij\ell}$ indicate whether vehicle $\ell$ visits customer $j$ immediately after customer $i$. The company seeks to minimize total distance plus a penalty for routes that exceed $D$ miles. In this example, the initial formulation $\mathcal M_0$ omits this penalty, which appears in the target formulation $\mathcal M^\star$ (in blue):
\vspace{-0.6cm}
\begin{center}
\scalebox{0.85}{\begin{minipage}[t]{0.45\textwidth} \centering \[  \begin{aligned} \mathcal M_0:\quad \min_x\quad & \sum_{\ell\in\mathcal L} \sum_{i,j\in C} d_{ij}x_{ij\ell}\\ \text{s.t.}\quad & x\in\mathcal X_{\mathrm{VRP}}. \end{aligned}  \] \end{minipage}  
\hspace{-0.5cm}
\rule[-3.2cm]{0.4pt}{2.9cm}
\hspace{0.3cm}
\begin{minipage}[t]{0.52\textwidth} \centering \[  \begin{aligned} \mathcal M^\star:\quad \min_{x,z}\quad & \sum_{\ell\in\mathcal L} \sum_{i,j\in C} d_{ij}x_{ij\ell} + \blue{\lambda\sum_{\ell\in\mathcal L}z_\ell}\\ \text{s.t.}\quad & x\in\mathcal X_{\mathrm{VRP}},\\ & \blue{\sum_{i,j\in C}d_{ij}x_{ij\ell} \le D+Mz_\ell,} && \blue{\forall \ell\in\mathcal L}\\ & \blue{z_\ell\in\{0,1\},} && \blue{\forall \ell\in\mathcal L,} \end{aligned} \] 
\end{minipage} }
\end{center}
$\mathcal X_{\mathrm{VRP}}$ denotes the standard VRP constraints. Repairing $\mathcal M_0$ requires coordinated changes: adding auxiliary variables, linking constraints, a new objective term, and estimating its coefficient $\lambda$.
\end{example}

\vspace{-0.15cm}

We refer to the variables of the initial formulation $\M_0$ as \emph{primary variables}: they encode the actionable decisions made and reported by practitioners. In Example~\ref{ex:VRP}, $x_{ij\ell}$ are primary, while the auxiliary variables $z_\ell$ introduced by $\M^\star$ are not.  Any additional variables introduced by $\M^\star$ are called \emph{auxiliary variables}. Only the primary variables of $\M^\star$ are observed in the user feedback. Let $\Pi_0$ be the projection that restricts a solution to the primary variables. A \emph{feasible lifting} of $x$ under $\M$ is any solution $y\in\X(d)$ satisfying $\Pi_0(y)=x$.

\textbf{Observed data.}
Instances are drawn from an unknown distribution $P$ over triples $(d, x^0, x^\star)$, where $x^0 \in \Sopt(d;\M_0)$ is the \emph{initial solution} and $x^\star \in \Pi_0(\Sopt(d;\M^\star))$ the \emph{target solution}, the projection onto the primary variables of an optimal solution of $\M^\star(d)$. We observe a sample $\D = \left\{ \left(d_i, x_i^{0}, x_i^{\star}\right) : i \in \I \right\} \sim P^{|\I|}$.
Optimality of the targets is a standard assumption of inverse optimization \citep{chan2025inverse, besbes2025contextual}: we use it to describe the problem and our methodology and relax it in some of our experiments. We further posit that whenever $x_i^0 \neq x_i^\star$, the target is strictly preferred, that is, every lifting of $x_i^0$ is infeasible or strictly suboptimal under $\M^\star$. This reflects practice, where the initial solution is the default and practitioners depart from it only when an alternative is strictly better. 

\textbf{Separation loss.}
Our goal is to find a formulation $\M$ that best explains the observed data $\D$. We formalize this  through \emph{separability}: $\M$ \emph{separates} $\D$ if, for every instance $i$ with $x_i^0 \neq x_i^\star$, $x_i^\star \in \Pi_0\!\left(\Sopt(d_i;\M)\right)$ and $x_i^0 \notin \Pi_0\!\left(\Sopt(d_i;\M)\right)$.
That is, the target solution is feasible and optimal under $\M$, whereas the initial solution is not. By the assumptions above, $\M^\star$ 
separates $\D$.
To quantify the degree of separation achieved by a candidate formulation $\M=(c,\X)$, we measure the feasibility and optimality of a solution $x$ on instance $i$ through its relative optimality gap 
  $\gamma_i(x) \triangleq \frac{Z_i(x)-Z_i^{\mathrm{opt}}}{|Z_i^{\mathrm{opt}}|},$
where $Z_i^{\mathrm{opt}} = \min\{ c(d_i)^\top y : y\in\X(d_i)\}$ and $Z_i(x) = \min \{ c(d_i)^\top y : y\in\X(d_i), \Pi_0(y)=x\}$
denote, respectively, the optimal objective value of $\M(d_i)$ and the best objective value among feasible liftings of $x$.\footnote{When $Z_i^{\mathrm{opt}}=0$, we add a small positive constant to the denominator, and if $x$ does not admit a feasible lifting under $\M(d_i)$ we define $\gamma_i(x) = 1$.} By construction, $\gamma_i(x)=0$ if and only if $x\in\Pi_0(\Sopt(d_i;\M))$.
Given tolerances $0 \leq \delta \leq \Delta < 1$, we define the \emph{separation loss}:
\begin{equation} \label{eq:separation_loss}
  \L(\M; \D) \triangleq 1-\frac{1}{|\I|} \sum_{i \in \I} r_i(\M), \;
  r_i(\M) \triangleq
  \begin{cases}
      \1_{\left\{\gamma_i(x_i^\star) \leq \Delta\right\}} \cdot \1_{\left\{\gamma_i(x_i^0) \geq \gamma_i(x_i^\star) + \delta\right\}} & \text{if } x_i^0 \neq x_i^\star\\
      \1_{\left\{\gamma_i(x_i^\star) \leq \Delta\right\}} & \text{otherwise.}
  \end{cases}
\end{equation}
Here, $\Delta$ bounds the relative optimality gap of the observed target solutions, while $\delta$ is the minimum gap difference separating the initial solution from the target. Thus, $r_i(\M)=1$ if $\M$ makes $x_i^\star$ near-optimal within tolerance $\Delta$ and, when $x_i^0\neq x_i^\star$, prefers $x_i^\star$ to $x_i^0$ by at least $\delta$. In this case, we say that $\M$ separates instance $i$. The loss $\L(\M; \D) \in [0,1]$ is thus the fraction of instances that $\M$ fails to separate, and $\L(\M; \D) = 0$ if and only if $\M$ separates $\D$.

We frame the formulation correction problem as finding a formulation minimizing the expected separation loss: $\min_{\M} \; \E_{\D \sim P}\left[\L(\M; \D)\right]$.
Since $P$ is unknown, we minimize $\L(\M;\D)$ on the observed dataset $\D$ and assess generalization on held-out instances. As there can be multiple equivalent formulations differing widely in size and readability, our method additionally aims for a parsimonious transformation of $\M_0$ 
that remains interpretable for practitioners maintaining $\M_0$.

\section{Methodology} \label{sec:methodology}

\begin{figure}[t]
  \centering
  \includegraphics[trim={0cm 0.1cm 0cm 0cm}, clip,width=0.98\columnwidth]{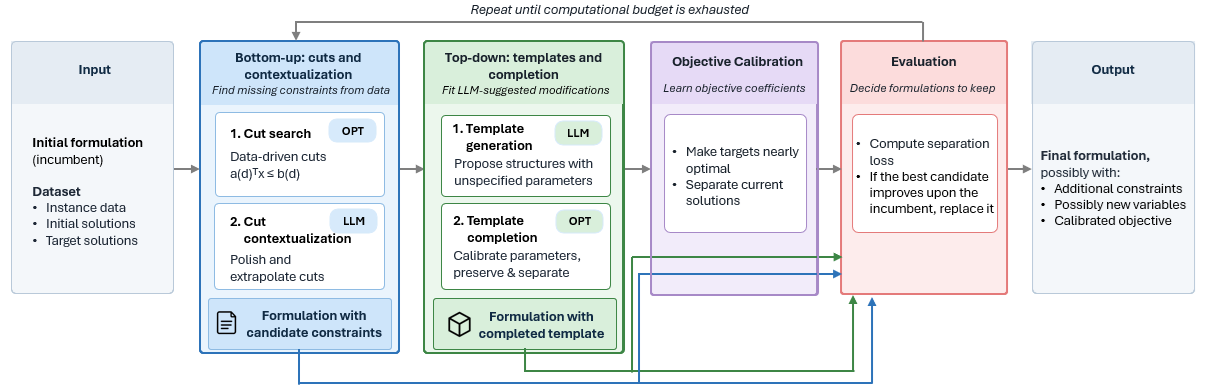}
  \caption{\textsc{TACIT} framework overview.} 
  \label{fig:framework_diagram_K} 
\end{figure}

Our framework is illustrated in Figure~\ref{fig:framework_diagram_K}. We maintain an incumbent formulation $\M=(c,\X)$, initialized by removing from $\M_0$ any constraints that render a target solution infeasible. We then iteratively generate, evaluate, and incorporate candidate transformations.
Candidates are generated via two complementary paradigms that combine optimization and LLM agents in opposite directions. In the \emph{bottom-up} paradigm, optimization finds linear inequalities separating solutions of the incumbent formulation from the targets, which an LLM agent contextualizes and extrapolates into meaningful constraints or constraint families. In the \emph{top-down} paradigm, an LLM agent proposes a transformation template whose learnable terms are calibrated through optimization. Candidates are evaluated using the separation loss \eqref{eq:separation_loss}, before and after objective calibration, and an improving candidate becomes the new incumbent. These steps are applied sequentially and repeated until the computational budget is exhausted; Algorithm~\ref{alg:full_framework} gives the full procedure.

Finding an improved formulation presents several challenges: (i) handling multiple and general formulation errors; (ii) navigating the vast space of possible formulations; (iii) constructing inequalities and objectives that apply across different instance sizes; (iv) producing interpretable transformations; and (v) generalizing to unseen instances under limited data.
We next detail the formulation transformations in our system and how they address these challenges.

\subsection{Bottom-up: cuts and contextualization} \label{sec:bottom-up}

\textbf{Cut search.}
A \emph{cut} for formulation $\M$ is a linear inequality satisfied by every target solution $x_i^\star$ but excludes at least one of the projected optimal solutions $\hat{x}_i$ of $\M$. We seek to add cuts to the current incumbent formulation $\M$ that is not yet fully correct. 
Challenges (ii) and (iii) immediately appear. To be deployed across instances, a cut has to be a function of the instance data $d$, which leads to a large search space. We restrict this space by considering cuts $a(d)^\top x \le b(d)$ where the components $a(d)$ and $b(d)$ are affine functions of the data. Moreover, to handle varying instance sizes (e.g., VRP with different number of customers), we employ \emph{family-level weights} to define the cuts: each coefficient has the form $a(d)_k = \alpha_{t(k)}^\top \phi_k(d)$, where $\phi_k(d)$ is the data associated with variable $k$ and $\alpha_{t(k)}$ is the weight vector (to be estimated) for its the family $t(k)$ of this variable; similarly $b(d) = \beta^\top \phi_0(d)$, where $\phi_0(d)$ represents data not associated to a particular variable family and $\beta$ are the weights to be estimated. More details about this concept can be found in Appendix~\ref{app:cuts}.

We infer $\alpha$ and $\beta$ through a MILP (see Appendix~\ref{app:cuts}) that maximizes the cut quality score  
\begin{align}
\max_{\alpha,\beta,z,\eta} \quad \frac{1}{|\mathcal I|}\sum_{i\in\mathcal I} z_i - \rho\bigl(\|\alpha\|_0 + \|\beta\|_0\bigr) + \e\eta, \end{align}
subject to validity for every $x_i^\star$, where $z_i$ indicates whether $\hat{x}_i$ violates the cut by at least a margin $\eta\ge\underline{\eta}>0$. The score therefore rewards 
cuts that eliminate a large fraction of the solutions from formulation $\M$, favors simple and sparse structure, and, through the small term $\e\eta$ with $0<\e\ll1$, breaks ties in favor of stronger separation.

Sparsity regularization limits overfitting, promoting cut generalization to unseen instances, and also improves interpretability, since meaningful combinatorial constraints often involve only few variables and instance features (Challenges (iv)-(v)).
We sweep the parameter $\rho$, generating cuts from the sparsest nonempty solution to the unregularized one, and evaluate them by separation loss. These candidates are then passed to the \emph{cut contextualization} step for polishing and extrapolation.

\textbf{Cuts contextualization.}  The cut-search procedure identifies sparse numerical structures that explain the observed corrections, but the resulting coefficients need not directly reveal the underlying modeling intent. We therefore invoke an LLM agent to contextualize each discovered cut using the full formulation~$\mathcal{M}$. The agent first \emph{polishes} the cut by proposing semantically plausible transformations, such as rounding coefficients, removing negligible terms, and introducing contextually relevant terms. For example, the search may return the cut $0.98x_i + 1.03x_j - 1.01y_k \leq 0.97$, while the underlying combinatorial structure is more naturally expressed as $x_i +x_j- y_k\leq 1$. Thus, the search identifies the relevant geometry, while the LLM uses the formulation context to recover a cleaner symbolic representation, which is crucial for interpretability. Importantly, the LLM does not unilaterally select the repair: it proposes a collection of candidate transformations, which are evaluated using the separation loss \eqref{eq:separation_loss}, and the lowest-loss variant is retained.

The agent then \emph{extrapolates} the accumulated cuts into indexed constraint families. For instance, $x_{3,7}\leq y_3$ may suggest the family $x_{ij}\leq y_i,\ \forall i,j$. Such families are ubiquitous (e.g., Example~\ref{ex:VRP}) and can encode many individual constraints. 
With limited data (Challenge~(v)), observed corrections may reveal only a few members of the underlying family, making extrapolation important for recovering its broader structure. Table~\ref{tab:cut-examples} provides cut contextualization examples from our experiments.

\subsection{Top-down: templates and calibration} \label{sec:top-down}

Some target formulations introduce additional variables and therefore cannot be recovered through cuts alone (Theorem~\ref{thm:embed}). The \emph{top-down paradigm} addresses this limitation by having an LLM agent propose a \emph{template} for the transformation, potentially introducing auxiliary variables, modifying variable domains, and adding constraint families with unspecified parameters. Optimization then calibrates these parameters to preserve target solutions while maximally separating unwanted ones.

\textbf{Template generation.} Rather than asking the LLM to produce a fully specified repair, we use it to propose a \emph{structural hypothesis}  for how the formulation should change. The agent receives the incumbent formulation $\M$, constraints removed during preprocessing, and samples of mismatched pairs $(\hat x_i,x_i^\star)$, where $\hat x_i$ is a projected optimal solution of $\M$, i.e., $\hat x_i=\Pi_0(\hat y_i)$ for $\hat y_i\in\Sopt(d_i;\M)$. It proposes a template $\tau$ specifying \emph{what} should change, while leaving its learnable parameters unspecified. This separation is important: the agent identifies a semantically meaningful modeling mechanism, while the subsequent calibration step determines whether and how it can be instantiated without excluding the observed targets. Table~\ref{tab:template-examples} provides template examples from our experiments.
Formally, a template $\tau$ specifies (i) new auxiliary variables $z$ with domain $Z_\tau(d)$; (ii) constraints $g_j(y,z,w;d)\leq 0$, $j\in J_\tau(d)$, parameterized by weights $w$ to be calibrated; and (iii) a possibly modified domain $\mathcal{Y}_\tau(d)$ for the existing variables $y$. Dependence on $d$ allows templates to generalize across instance sizes. The functions $g_j$ are bi-affine in $(y,z)$ and $w$, yielding the updated feasible set
\begin{equation}
    \X_\tau(d; w) = \left\{ (y, z) : y \in \mathcal{Y}(d), \; z \in Z_\tau(d), \; g_j(y, z, w; d) \le 0, \; \forall j \in J_\tau(d) \right\}.
\end{equation}
A template does not change the objective function and assigns zero objective coefficients to new variables; their contribution is determined separately by the objective-calibration step in Section~\ref{sec:objective-calibration}.

For example, in the VRP setting of Example~\ref{ex:VRP}, the missing long-route penalty cannot be recovered through cuts over the existing variables alone. A template can introduce binary variables $z_\ell$, $\ell\in\mathcal L$, indicating long routes and constraints $g_\ell(x,z,w;d)\leq0$, $\ell\in\mathcal L$, linking them to route lengths. Calibration may instantiate these as $\sum_{i,j\in C} d_{ij}x_{ij\ell}\leq D+Mz_\ell$. Thus, the LLM agent identifies the missing modeling structure, while optimization determines its numerical parameters, enabling repairs involving multiple connected modeling changes (Challenge~(i)).

\textbf{Template completion.} We calibrate the template weights $w$ to preserve all observed targets while maximally separating the optimal solutions of the current formulation $\M$. Preservation requires each $x_i^\star$ to admit a lifting $y_i^\star$ into the non-primary variables of $\M$ and an assignment of the new variables $z_i^\star$ such that $(y_i^\star,z_i^\star)$ belongs to the new feasible set $\X_\tau(d_i; w)$. 
We therefore solve an optimization problem with max-min structure to jointly calibrate the template parameters $w$ against the best completions $\hat z_i$ of the new variables. We solve this problem via constraint generation and discard any template where no weights allow every target to be feasibly lifted; see Appendix~\ref{app:template_completion}.

\subsection{Objective calibration} \label{sec:objective-calibration} 

Cuts and templates modify the feasible set, but misspecification may also lie in the objective. Moreover, auxiliary variables introduced by templates may matter via their cost. Thus, every candidate formulation $\M=(c,\X)$ undergoes \emph{objective calibration}, keeping $\X$ fixed while learning $c$. Calibration balances two goals: (i) each target $x_i^\star$ should admit a feasible lifting $y_i^\star$ that is nearly optimal under the calibrated objective, allowing some suboptimality to accommodate potentially suboptimal target solutions in practice, (ii) the lifted target $y_i^\star$ should outperform the current model solution $\hat y_i$ by as large a margin as possible. Appendix~\ref{app:obj_calibration} provides the resulting optimization problem.

\section{Numerical Results}

\subsection{Experimental Setup} \label{sec:setup}

\textbf{Dataset.} We introduce a benchmark of 38 scenarios spanning nine classes of LPs and MILPs. The benchmark combines classical optimization problems with realistic applications, including scenarios motivated by resource allocation in hyperscalers. 
The scenarios cover a broad range of formulation errors along two dimensions: the affected component (objective, constraints, variables, domains) and nature of the error (extraneous, missing, miscalibrated, local, global). Many scenarios deliberately combine multiple forms of misspecification. Each scenario specifies an initial formulation $\M_0$ and a target formulation $\M^\star$. Appendix~\ref{app:problems_and_scenarios} gives them in full and Table~\ref{tab:scenario-taxonomy} summarizes the taxonomy. We will release the benchmark for reproducibility and future research on formulation repair.

\textbf{Baselines and protocol.} We compare our framework against (i) the initial formulation, (ii) the data-driven methods of \citet{hewitt2020data,bayani2024learning}, which we denote AOP, and (iii) an LLM-only baseline that generates the complete optimization model as \texttt{gurobipy} code from the initial code and pairs of model and target solutions. For each scenario, every method is run 5 times using $N=128$ training instances and evaluated on a disjoint set of 256 test instances.

\textbf{Metrics.} We evaluate the final incumbent formulation $\M$ on both training and test instances. On the training set, we report the separation loss $\L$ and the \emph{separation rate}, defined as the fraction of runs with $\L=0$. On the test set, we report four metrics. The \emph{feasibility rate} is the fraction of test instances for which the solution produced by $\M$ is feasible under $\M^\star$. The \emph{optimality rate} is the fraction for which that solution is feasible and within $1\%$ of the optimal value under $\M^\star$ in relative gap. The \emph{recovery rate} is the proportion of scenarios for which this optimality criterion holds on \textbf{all 256 test instances}. These metrics are averaged over the 5 runs of each method. Finally, the \emph{recoverable rate} is the proportion of scenarios for which at least one of the 5 runs attains recovery, capturing the fraction of scenarios that are amenable to formulation repair by \textsc{TACIT}.

\subsection{Results} \label{sec:results}

We run \textsc{TACIT} with three LLM backbones (DeepSeek-V4-Flash, Kimi-K2.6, GPT-5.6-sol) for cut contextualization and template generation, using five iterations of the framework per run.  To match this budget, the LLM-only baseline is given five attempts per run, and its best formulation by training separation loss is kept. Additional implementation details are provided in Appendix~\ref{app:implementation_details}.

\textbf{TACIT substantially improves formulation repair across LLM backbones.} Table~\ref{tab:llms_comparison} compares TACIT with LLM-only repair and AOP. TACIT improves every test metric across all backbones. AOP (Adapted Optimization Problem) never recovers a correct formulation: increasing its penalty weight $c$ makes solutions significantly suboptimal before it makes them feasible (Table~\ref{tab:AOP} in the appendix). With DeepSeek-V4-Flash and Kimi-K2.6, TACIT nearly doubles the scenario recovery and recoverable rates compared to LLM-only: $19.5\% \rightarrow 45.3\%$ and $31.6\% \rightarrow 63.2\%$ respectively for DeepSeek-V4-Flash; $30\% \rightarrow 55.3\%$ and $36.8\% \rightarrow 71.1\%$, for Kimi-K2.6. Importantly, TACIT remains beneficial even with the substantially stronger GPT-5.6-sol, where TACIT obtains additional $17.4$ and $18.4$ percentage points, respectively, and achieves a total $64.2\%$ recovery and $78.9\%$ recoverable rate of scenarios. Overall, TACIT improves performance across all reasoning models, with larger gains for weaker models, and is able to lift weaker models to approach the performance of much stronger models. Table~\ref{tab:llms_comparison} further shows that feasibility is recovered reliably across all backbones, suggesting that feasibility is not the primary bottleneck in formulation recovery; the harder challenge is recovering target-optimal behavior.

\begin{table}[t]
\caption{\frameworkName against LLM baselines under different LLM backbone models.}
\vspace{-0.2cm}
\label{tab:llms_comparison}
\scalebox{0.95}{
\centering
\footnotesize
\begin{tabular}{llcccccccc}
\toprule
 & & \multicolumn{4}{c}{Training} & \multicolumn{4}{c}{Test} \\
\cmidrule(lr){3-6} \cmidrule(lr){7-10}
 & Model & $\L$ $\downarrow$ & sep. $\uparrow$ & feas. $\uparrow$ & opt. $\uparrow$ & feas. $\uparrow$ & opt. $\uparrow$ & recovery $\uparrow$ & recoverable $\uparrow$ \\
\midrule
\multirow{2}{*}{\rotatebox[origin=c]{90}{\scriptsize AOP}}
& AOP linear ($c\!=\!\!16$) & 0.94 & 0.0\% & 0.57 & 0.07 & 0.57 & 0.04 & 0.0\% & 0.0\% \\
& AOP tree ($c\!=\!\!16$) & 0.93 & 0.0\% & 0.58 & 0.10 & 0.57 & 0.07 & 0.0\% & 0.0\% \\
\addlinespace
\multirow{3}{*}{\rotatebox[origin=c]{90}{\scriptsize LLM-only}}
 & DeepSeek-V4-F & 0.64 & 18.9\% & 0.57 & 0.37 & 0.57 & 0.37 & 19.5\% & 31.6\% \\
 & Kimi-K2.6 & 0.56 & 29.5\% & 0.66 & 0.46 & 0.66 & 0.47 & 30.0\% & 36.8\% \\
 & GPT-5.6-sol & 0.40 & 47.9\% & 0.74 & 0.59 & 0.75 & 0.59 & 46.8\% & 60.5\% \\
\addlinespace
\multirow{3}{*}{\rotatebox[origin=c]{90}{\scriptsize \frameworkName}}
 & DeepSeek-V4-F & 0.19 & 57.4\% & 0.79 & 0.64 & 0.78 & 0.61 & 45.3\% & 63.2\% \\
 & Kimi-K2.6 & \textbf{0.12} & \textbf{66.8\%} & 0.87 & 0.77 & 0.85 & 0.73 & 55.3\% & 71.1\% \\
 & GPT-5.6-sol & 0.12 & 66.3\% & \textbf{0.89} & \textbf{0.79} & \textbf{0.88} & \textbf{0.76} & \textbf{64.2\%} & \textbf{78.9\%} \\
\addlinespace
\bottomrule
\end{tabular}
}
\end{table}

\textbf{TACIT provides the largest gains on structurally demanding misspecifications.} Figure~\ref{fig:framework_diagram} breaks down test optimality by misspecification type, revealing a clear spectrum of difficulty. Some errors, such as variable-domain misspecifications, can be readily identified from the formulation context. In contrast, LLM-only baselines struggle when repair requires reconstructing structure absent from the initial formulation, such as missing variables or global constraints. TACIT substantially improves performance on these harder categories across backbones. For example, on missing variables, TACIT improves test optimality $0.30 \rightarrow 0.44$ with DeepSeek-V4-Flash and $0.46 \rightarrow 0.74$ with Kimi-K2.6; on global constraints, the improvements are $0.21 \rightarrow 0.47$ and $0.35\rightarrow 0.69$. Similar gains arise for extra constraints. These results suggest that TACIT is most valuable when formulation repair goes beyond recognizing local errors and requires reconstructing missing model structure.

\begin{figure}[t]
  \centering
  \includegraphics[trim={0cm 0.27cm 0cm 0.35cm}, clip, width=0.99\columnwidth]{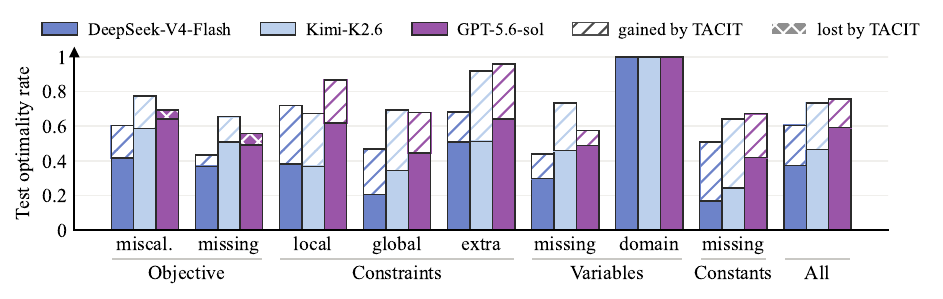}
  \caption{Change in optimality rate induced by \frameworkName under different LLM backbone models.}
  \label{fig:framework_diagram} 
\end{figure}

\textbf{LLM reasoning and optimization play complementary roles.} We ablate \textsc{TACIT} with DeepSeek-V4-Flash in Table~\ref{tab:individual_components}. The top-down pathway is already strong, achieving recovery and recoverability of $41.1\%$ and $57.9\%$, but incorporating the bottom-up pathway further improves them to $45.3\%$ and $63.2\%$. The results also highlight the importance of combining LLM reasoning with optimization: the best pure approach, whether based only on optimization-driven cut generation or on LLM-only templates, achieves recovery and recoverability of $21.1\%$ and $31.6\%$, less than half of the full system. Adding optimization-based calibration to the strongest pure approach, LLM-generated templates, raises recovery $21.1\%\rightarrow41.1\%$ and recoverability $31.6\% \rightarrow 57.9\%$. 
The breakdown in Table~\ref{tab:methods-misspec} helps explain this complementarity: cut-based inference is particularly useful for local constraint errors, whereas templates are stronger when the repair requires new modeling structure, such as auxiliary variables. Together, these results show that TACIT benefits both from combining LLM reasoning with optimization and from combining top-down semantic reasoning with bottom-up evidence from observed decisions.

\begin{table}[t]
\caption{Ablation of TACIT components using DeepSeek-V4-Flash. All variants share preprocessing and objective calibration. \texttt{Template} only uses LLM-guessed constants.}
\vspace{-0.2cm}
\label{tab:individual_components}
\centering
\footnotesize
\scalebox{0.95}{
\begin{tabular}{lcccccccc}
\toprule
 & \multicolumn{4}{c}{Training} & \multicolumn{4}{c}{Test} \\
\cmidrule(lr){2-5} \cmidrule(lr){6-9}
 & $\L$ $\downarrow$ & sep. $\uparrow$ & feas. $\uparrow$ & opt. $\uparrow$ & feas. $\uparrow$ & opt. $\uparrow$ & recovery $\uparrow$ & recoverable $\uparrow$ \\
\midrule
Obj. calibration only & 0.76 & 5.8\% & 0.41 & 0.23 & 0.41 & 0.23 & 5.3\% & 5.3\% \\
Cuts & 0.35 & 33.2\% & 0.57 & 0.33 & 0.54 & 0.27 & 14.7\% & 21.1\% \\
Cuts + contextualize & 0.35 & 33.2\% & 0.59 & 0.35 & 0.57 & 0.30 & 17.9\% & 26.3\% \\
Template & 0.60 & 22.6\% & 0.56 & 0.39 & 0.56 & 0.39 & 21.1\% & 31.6\% \\
Template + complete & 0.31 & 47.9\% & 0.77 & 0.59 & 0.77 & 0.58 & 41.1\% & 57.9\% \\
\frameworkName & \textbf{0.19} & \textbf{57.4\%} & \textbf{0.79} & \textbf{0.64} & \textbf{0.78} & \textbf{0.61} & \textbf{45.3\%} & \textbf{63.2\%} \\
\bottomrule
\end{tabular}
}
\end{table}

\textbf{Performance under limited training data.} Table~\ref{tab:sizes} shows that top-down inference is stable across training sizes, whereas bottom-up improves by 12.6 percentage points from 32 to 256 instances. \frameworkName combines both behaviors, maintaining strong performance across training sizes.

\textbf{Robustness to suboptimal feedback.}
We relax the assumption that observed target decisions are optimal, allowing up to $5\%$ suboptimality. Test feasibility remains unchanged ($0.78$ to $0.79$), while test recovery decreases by 4.8 percentage points. This is a deliberately challenging setting: \frameworkName may never observe a target-optimal solution, yet is evaluated on whether it recovers target-optimal behavior. These results suggest that imperfect feedback still conveys useful structural information, allowing \frameworkName to remain robust even as exact preference recovery becomes harder; see Table~\ref{tab:subopt}.

\textbf{The recovered transformations remain explicit and inspectable.} Beyond predictive performance, TACIT returns modifications to the optimization formulation rather than an opaque correction of its outputs. Section~\ref{sec:final_form_samples} illustrates this across three scenarios. In TP-Scenario2, it recovers the target exactly, adding $x_{ij} \le 24 y_{ij}$ for all $i \in I$, $j \in J$, and $\sum_{i \in I} y_{ij} \le 2$ for all $j \in J$. In the more challenging MCF-Scenario2, it fixes the objective function by introducing auxiliary variables and the cuts $f_{ij} \le u_{ij} y_{ij}$ for all $i \in V, j \in O_i$, plus the valid but redundant $0 \cdot y_{ij} \le f_{ij}$. Results are not always as clean. In GPU-Scenario4, the recovered objective $-1.90\sum_{i\in I} s_i z_i - 0.10\sum_{i\in I} z_i - 2.42\sum_{i\in I}\sum_{j\in J_i} x_{ij}$ is equivalent under the problem constraints to a scaling of the target $-\sum_{i \in I} s_i z_i$ with a small tie-breaking term $-\epsilon \sum_{i\in I} z_i$. 
Table~\ref{tab:cut-examples} further shows how the cut-contextualization agent turns numerical cuts into interpretable constraints.
Across inspected examples, TACIT recovers recognizable modeling constructs, such as linking constraints, combinatorial restrictions, auxiliary variables, and structured objective terms, rather than opaque instance-specific corrections. Even when non-canonical terms are present, the recovered formulations remain directly inspectable.

\section{Conclusion}

Formulation repair is challenging: the source of misspecification is unknown, modeling logic may be absent from the initial formulation, and sparse feedback can be consistent with many possible repairs. We introduced TACIT, an end-to-end framework combining LLM-guided semantic reasoning with optimization-based search and validation to address this challenge. Across diverse formulation errors, it substantially improves recovery over the baselines, especially on structurally demanding misspecifications, while remaining effective with limited and imperfect feedback. Our results demonstrate that user corrections provide sufficient signal to systematically improve optimization models, a step toward models that learn from their mistakes.

\ifdefined\isarxiv

\else

\section*{Reproducibility Statement}
All 38 benchmark scenarios, including the base problems, initial and target formulations, and instance generation procedures, are specified in Appendix~\ref{app:problems_and_scenarios}. The cut-search MILP, template completion, and objective calibration problems are given in Appendices~\ref{app:cuts}, \ref{app:template_completion}, and~\ref{app:obj_calibration}, respectively. Appendix~\ref{app:implementation_details} details the prompts, compute budgets, and hyperparameters used for numerical experiments. The proof of Theorem~\ref{thm:embed} is in Appendix~\ref{app:embed}. The full code will be released upon acceptance.

\section*{AI Use Statement}

Beyond their role in the proposed framework, LLMs were used to polish the writing, search for related work, and help in implementing the benchmark scenarios. All text, references, and scenarios were verified by the authors, who take full responsibility for the manuscript and its artifacts.

\fi

\bibliography{refs.bib}

@article{ao2026optirepair,
  title={Optirepair: Closed-loop diagnosis and repair of supply chain optimization models with LLM agents},
  author={Ao, Ruicheng and Simchi-Levi, David and Wang, Xinshang},
  journal={arXiv preprint arXiv:2602.19439},
  year={2026}
}

@article{chan2025inverse,
  title={Inverse optimization: Theory and applications},
  author={Chan, Timothy CY and Mahmood, Rafid and Zhu, Ian Yihang},
  journal={Operations Research},
  volume={73},
  number={2},
  pages={1046--1074},
  year={2025},
  publisher={INFORMS}
}

@article{hewitt2020data,
  title={Data-driven optimization model customization},
  author={Hewitt, Mike and Frejinger, Emma},
  journal={European Journal of Operational Research},
  volume={287},
  number={2},
  pages={438--451},
  year={2020},
  publisher={Elsevier}
}

@article{ao2026solver,
  title={Solver-in-the-Loop: MDP-Based Benchmarks for Self-Correction and Behavioral Rationality in Operations Research},
  author={Ao, Ruicheng and Simchi-Levi, David and Wang, Xinshang},
  journal={arXiv preprint arXiv:2601.21008},
  year={2026}
}

@article{chen2024diagnosing,
  title={Diagnosing infeasible optimization problems using large language models},
  author={Chen, Hao and Constante-Flores, Gonzalo E and Li, Can},
  journal={INFOR: Information Systems and Operational Research},
  volume={62},
  number={4},
  pages={573--587},
  year={2024},
  publisher={Taylor \& Francis}
}

@article{OptiMUS,
  title={OptiMUS-0.3: Using large language models to model and solve optimization problems at scale},
  author={AhmadiTeshnizi, Ali and Gao, Wenzhi and Brunborg, Herman and Talaei, Shayan and Lawless, Connor and Udell, Madeleine},
  journal={arXiv preprint arXiv:2407.19633},
  year={2024}
}

@article{thind2025optimai,
  title={Optimai: Optimization from natural language using llm-powered ai agents},
  author={Thind, Raghav and Sun, Youran and Liang, Ling and Yang, Haizhao},
  journal={arXiv preprint arXiv:2504.16918},
  year={2025}
}

@article{astorga2024autoformulation,
  title={Autoformulation of mathematical optimization models using llms},
  author={Astorga, Nicol{\'a}s and Liu, Tennison and Xiao, Yuanzhang and van der Schaar, Mihaela},
  journal={arXiv preprint arXiv:2411.01679},
  year={2024}
}

@inproceedings{zhang2024solving,
  title={Solving general natural-language-description optimization problems with large language models},
  author={Zhang, Jihai and Wang, Wei and Guo, Siyan and Wang, Li and Lin, Fangquan and Yang, Cheng and Yin, Wotao},
  booktitle={Proceedings of the 2024 Conference of the North American Chapter of the Association for Computational Linguistics: Human Language Technologies (Volume 6: Industry Track)},
  pages={483--490},
  year={2024}
}

@article{bayani2024learning,
  title={Learning and modeling implicit constraints in optimiza-tion models through decision trees},
  author={Bayani, M and Adulyasak, Y and Rousseau, LM},
  journal={Les Cahiers du GERAD ISSN},
  volume={711},
  pages={2440},
  year={2024}
}

@article{chen2026optichat,
  author  = {Chen, Hao and Constante-Flores, Gonzalo Esteban and
             Mantri, Krishna Sri Ipsit and Kompalli, Sai Madhukiran and
             Ahluwalia, Akshdeep Singh and Li, Can},
  title   = {OptiChat: Bridging Optimization Models and Practitioners
             with Large Language Models},
  journal = {INFORMS Journal on Data Science},
  volume  = {5},
  number  = {3},
  pages   = {199--220},
  year    = {2026},
  doi     = {10.1287/ijds.2025.0074}
}

@article{caro2026humanalgorithm,
  author  = {Caro, Felipe and Colliard, Jean-Edouard and Katok, Elena and
             Ockenfels, Axel and Stier-Moses, Nicolas and Tucker, Catherine and
             Wu, D. J.},
  title   = {Introduction to the Special Issue on the Human-Algorithm Connection},
  journal = {Management Science},
  volume  = {72},
  number  = {1},
  pages   = {1--13},
  year    = {2026},
  doi     = {10.1287/mnsc.2023.intro.v72.n1}
}

@article{abdellaoui2026implicit,
  author  = {Abdellaoui, Abdelhakim and Boufous, Ayoub and El Hallaoui, Issmail and Benabbou, Loubna and Aube, Francois and Amazouz, Mouloud},
  title   = {Learning Implicit Feasibility Constraints for Real-World Routing and Scheduling: Application to Log Transportation},
  journal = {arXiv preprint arXiv:2606.05353},
  year    = {2026}
}

@article{kitaoka2025inverse,
  author  = {Kitaoka, Akira},
  title   = {Inverse Mixed-Integer Programming: Learning Constraints then Objective Functions},
  journal = {arXiv preprint arXiv:2510.04455},
  year    = {2025}
}

@article{kumar2021learning,
  author  = {Kumar, Mohit and Kolb, Samuel and De Raedt, Luc and Teso, Stefano},
  title   = {Learning Mixed-Integer Linear Programs from Contextual Examples},
  journal = {arXiv preprint arXiv:2107.07136},
  year    = {2021}
}

@inproceedings{ren2025feasible,
  author    = {Ren, Ke and Mohajerin Esfahani, Peyman and Georghiou, Angelos},
  title     = {Inverse Optimization via Learning Feasible Regions},
  booktitle = {Proceedings of the 42nd International Conference on Machine Learning},
  series    = {Proceedings of Machine Learning Research},
  volume    = {267},
  pages     = {51471--51488},
  year      = {2025}
}

@inproceedings{tan2020learning,
  author    = {Tan, Yingcong and Terekhov, Daria and Delong, Andrew},
  title     = {Learning Linear Programs from Optimal Decisions},
  booktitle = {Advances in Neural Information Processing Systems},
  volume    = {33},
  year      = {2020}
}

@article{moghaddass2021multiple,
  author  = {Moghaddass, Mahsa and Terekhov, Daria},
  title   = {Inverse Integer Optimization with Multiple Observations},
  journal = {Optimization Letters},
  volume  = {15},
  number  = {4},
  pages   = {1061--1079},
  year    = {2021},
  doi     = {10.1007/s11590-021-01721-4}
}

@article{bulut2021complexity,
  author  = {Bulut, Aykut and Ralphs, Ted K.},
  title   = {On the Complexity of Inverse Mixed Integer Linear Optimization},
  journal = {SIAM Journal on Optimization},
  volume  = {31},
  number  = {4},
  pages   = {3014--3043},
  year    = {2021},
  doi     = {10.1137/20M1377369}
}

@article{aswani2018noisy,
  author  = {Aswani, Anil and Shen, Zuo-Jun Max and Siddiq, Auyon},
  title   = {Inverse Optimization with Noisy Data},
  journal = {Operations Research},
  volume  = {66},
  number  = {3},
  pages   = {870--892},
  year    = {2018},
  doi     = {10.1287/opre.2017.1705}
}

@article{mohajerin2018imperfect,
  author  = {Mohajerin Esfahani, Peyman and Shafieezadeh-Abadeh, Soroosh and Hanasusanto, Grani A. and Kuhn, Daniel},
  title   = {Data-Driven Inverse Optimization with Imperfect Information},
  journal = {Mathematical Programming},
  volume  = {167},
  number  = {1},
  pages   = {191--234},
  year    = {2018},
  doi     = {10.1007/s10107-017-1216-6}
}

@article{maragno2025constraint,
  author  = {Maragno, Donato and Wiberg, Holly and Bertsimas, Dimitris and Birbil, S. Ilker and den Hertog, Dick and Fajemisin, Adejuyigbe O.},
  title   = {Mixed-Integer Optimization with Constraint Learning},
  journal = {Operations Research},
  volume  = {73},
  number  = {2},
  pages   = {1011--1028},
  year    = {2025},
  doi     = {10.1287/opre.2023.2523}
}

@article{fajemisin2021survey,
  author  = {Fajemisin, Adejuyigbe and Maragno, Donato and den Hertog, Dick},
  title   = {Optimization with Constraint Learning: A Framework and Survey},
  journal = {arXiv preprint arXiv:2110.02121},
  year    = {2021}
}

@article{baxi2026online,
title = {Online Rack Placement in Large-Scale Data Centers: Online Sampling Optimization and Deployment},
author = {Baxi, Saumil and Cummings, Kayla and Jacquillat, Alexandre and Lo, Sean and McDonald, Rob and Mellou, Konstantina and Menache, Ishai and Molinaro, Marco},
journal = {Operations Research},
year = {2026},
doi = {10.1287/opre.2025.1992}
}

@inproceedings{chainofexperts2024,
  author    = {Xiao, Ziyang and Zhang, Dongxiang and Wu, Yangjun and Xu, Lilin and Wang, Yuan and Han, Xiongwei and Fu, Xiaojin and Zhong, Tao and Zeng, Jia and Song, Mingli and Chen, Gang},
  title     = {Chain-of-Experts: When {LLM}s Meet Complex Operations Research Problems},
  booktitle = {The Twelfth International Conference on Learning Representations},
  year      = {2024}
}

@inproceedings{llmopt2025,
  author    = {Jiang, Caigao and Shu, Xiang and Qian, Hong and Lu, Xingyu and Zhou, Jun and Zhou, Aimin and Yu, Yang},
  title     = {{LLMOPT}: Learning to Define and Solve General Optimization Problems from Scratch},
  booktitle = {The Thirteenth International Conference on Learning Representations},
  year      = {2025}
}

@article{xiao2025survey,
  author  = {Xiao, Ziyang and Xie, Jingrong and Xu, Lilin and Guan, Shisi and Zhu, Jingyan and Han, Xiongwei and Fu, Xiaojin and Yu, WingYin and Wu, Han and Shi, Wei and Kang, Qingcan and Duan, Jiahui and Zhong, Tao and Yuan, Mingxuan and Zeng, Jia and Wang, Yuan and Chen, Gang and Zhang, Dongxiang},
  title   = {A Survey of Optimization Modeling Meets {LLM}s: Progress and Future Directions},
  journal = {arXiv preprint arXiv:2508.10047},
  year    = {2025}
}

@article{besbes2025contextual,
  title={Contextual inverse optimization: Offline and online learning},
  author={Besbes, Omar and Fonseca, Yuri and Lobel, Ilan},
  journal={Operations Research},
  volume={73},
  number={1},
  pages={424--443},
  year={2025},
  publisher={INFORMS}
}
\bibliographystyle{plainnat}

\appendix
\setcounter{table}{0}
\setcounter{figure}{0}
\setcounter{algorithm}{0}
\renewcommand{\thetable}{\Alph{section}.\arabic{table}}
\renewcommand{\thefigure}{\Alph{section}.\arabic{figure}}
\renewcommand{\thealgorithm}{\Alph{section}.\arabic{algorithm}}
\makeatletter
\@addtoreset{table}{section}
\@addtoreset{figure}{section}
\@addtoreset{algorithm}{section}
\makeatother

\clearpage

\section{Proofs}

\subsection{Limitation of embedded ML models for formulation correction} \label{app:embed}

As mentioned in Section~\ref{sec:related}, one approach for correcting formulations is by using an ML model to represent the user action of mapping from ``incorrect'' to ``target'' solution, and embedding this model within the optimization itself~\citep{hewitt2020data,bayani2024learning,abdellaoui2026implicit}. That is, given an initial formulation \[ \M_0(d) = \min\left\{ c_0(d)^\top x : x\in\X_0(d) \right\}, \] consider the modified problem \[ \M_{0,f}(d) = \min\left\{ c_0(d)^\top x : x=f(x),\ x\in\X_0(d) \right\}, \] where $f:\mathbb{R}^n\rightarrow\mathbb{R}^n$ is a machine-learning model representing the user's correction of a solution \(x\) into a solution \(f(x)\). The function \(f\) is learned so that, ideally, the optimal solutions of \(\M_{0,f}(d)\) match the optimal solutions of the target formulation \(\M^\star(d)\) for every instance: \[ \argmin \M_{0,f}(d) = \Sopt(d;\M^\star) \qquad \text{for all } d. \] Some approaches relax the fixed-point equality \(x=f(x)\) through a Lagrangian penalty, i.e., the objective becomes $c_0(d)^\top x - \lambda \|x - f(x)\|_{\infty}$, but we stick to the original fixed-point equality to avoid the extra parameter $\lambda$.

The following result shows that this approach cannot represent all formulation corrections. In particular, the limitation already arises when the initial formulation is missing a nonlinear term in the objective. Although such a term can be represented in a MILP using auxiliary variables and constraints, it cannot, in general, be recovered by restricting the feasible set of the original variables. In fact, the impossibility persists even if new constraints are added. We note that through the use of templates, our proposed system is able to handle such corrections.

 \begin{theorem} \label{thm:embed} There exist an initial formulation \(\M_0\) and a target formulation \(\M^\star\) such that no function \(f\) and no set \(K\) of additional constraints make the optimal solutions of \[ \M_{0,f,K}(d) \coloneqq \min\left\{ c_0(d)^\top x : x=f(x),\ x\in\X_0(d),\ x\in K \right\} \] equal to the optimal solutions of \(\M^\star(d)\) for every instance \(d\). 
 \end{theorem}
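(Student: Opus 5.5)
The key observation is that the modified problem $\M_{0,f,K}(d)$ keeps the original objective $c_0(d)^\top x$. It changes only the feasible set, which becomes $F(d) \coloneqq \{x\in\X_0(d): x=f(x),\ x\in K\}$. A feasible-set restriction under a fixed linear objective has a rigid structural property. If two points $x,x'$ are both optimal for $\M_{0,f,K}(d)$, they have the same objective value, $c_0(d)^\top x = c_0(d)^\top x'$. Moreover, any optimal point must lie in the original feasible set. So I would build an instance in which the target optimal set contains two points with different $c_0$-values. No choice of $f$ or $K$ can then reproduce that target set, whatever $K$ is, even if $K$ depends on $d$.

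Concretely, I will take a single instance $d$ (the statement quantifies over all $d$, so one bad instance suffices). Let $\X_0=\{0,1\}^2$ and $c_0=(1,0)$, so $\M_0$ minimizes $x_1$. The target $\M^\star$ adds a nonlinear term in the spirit of Example~\ref{ex:VRP}: minimize $x_1 + 2(1-x_1)z$, with auxiliary binary $z\ge x_2$ encoding a penalty when $x_2=1$ but $x_1=0$. I will tune the costs so that the projected target optimal set is $\{(0,0),(1,1)\}$, or alternatively so that it contains points of different $c_0$-value. A cleaner option is to take the target objective $|x_1-x_2|$, linearized with an auxiliary $t\ge x_1-x_2$, $t\ge x_2-x_1$. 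Its optimal set is $\{(0,0),(1,1)\}$, and $c_0$ takes values $0$ and $1$ on these two points. If $\M_{0,f,K}$ had exactly this optimal set, both points would be feasible in $F(d)$ and both optimal. That forces $0=1$, a contradiction.

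I should also confirm the comparison convention. The theorem compares optimal solutions on the primary variables, since $\M^\star$ has auxiliary variables. So I would state that ``equal'' means $\argmin \M_{0,f,K}(d)=\Pi_0(\Sopt(d;\M^\star))$, matching the paper's projection conventions. The argument above is unaffected by this choice.

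The only delicate step is interpretive rather than technical. I need to make sure the definition of $\M_{0,f,K}$ truly fixes the objective to $c_0$ and allows $f$ and $K$ to be arbitrary, which the displayed problem does. I also need to show the target is a legitimate MILP formulation, using the auxiliary-variable linearization. With those checked, the contradiction is immediate. I would close by remarking that the Lagrangian-penalty variant, which alters the objective, falls outside the statement as given.
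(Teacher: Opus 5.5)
Your proof is correct, but it takes a different route from the paper's.

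\textbf{The paper's route.} The paper uses the scalar family $\min\{dx+|x| : x\in\{-1,0,1\}\}$ versus $\min\{dx : x\in\{-1,0,1\}\}$ and three instances $d\in\{-2,0,2\}$. Each of these has a \emph{unique} target optimum. The restricted set $S=\{x : x=f(x),\ x\in\{-1,0,1\},\ x\in K\}$ does not depend on $d$. So the instance $d=-2$ forces $1\in S$ and the instance $d=2$ forces $-1\in S$. At $d=0$ the zero objective then makes all of $S$ optimal, which contradicts the target optimum $\{0\}$.

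\textbf{Your route.} You use a single instance whose projected target optimal set $\{(0,0),(1,1)\}$ contains two points with different $c_0$-values. These two points cannot both be optimal under the unchanged objective $c_0$, whatever the feasible set.

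\textbf{What each buys.} Your argument is shorter, and it does not need $f$ and $K$ to be independent of $d$. The same observation in fact applies to the paper's own family at $d=1$: there the target optima $\{-1,0\}$ have $c_0$-values $-1$ and $0$.

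What your argument gives up is robustness. It refutes only the requirement that \emph{all} tied target optima be reproduced. It fails under the practically natural weaker goal that every optimal solution of $\M_{0,f,K}(d)$ be target-optimal: take $f$ the identity and $K=\{(0,0)\}$. The paper's instances have unique target optima, and its contradiction still holds under that weaker goal. This is because the offending tie at $d=0$ is forced by the structure of the fixed set $S$, rather than assumed in the target.

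\textbf{A correction.} Drop your first sketch. The objective $x_1+2(1-x_1)z$ is bilinear, and its unique optimum over $\{0,1\}^2$ is $(0,0)$. Only the $|x_1-x_2|$ construction does the job.
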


\begin{proof}
Consider the family of instances parametrized by the scalar \(d\in\mathbb{R}\). The target formulation is \[ \M^\star(d) = \min\left\{ dx+|x| : x\in\{-1,0,1\} \right\}, \] whereas the initial formulation, which omits the nonlinear objective term, is \[ \M_0(d) = \min\left\{ dx : x\in\{-1,0,1\} \right\}. \] The target formulation can equivalently be written as the MILP \[ \begin{aligned} \M^\star(d):\qquad \min_{x,y_1,y_2}\quad & dx+y_1+y_2\\ \text{s.t.}\quad & x=y_1-y_2,\\ & x\in\{-1,0,1\},\\ & y_1,y_2\in\{0,1\}. \end{aligned} \] Thus, \(\M^\star\) can be obtained from \(\M_0\) by introducing auxiliary variables and constraints together with the missing objective term.

For a function $f : \R \rightarrow \R$ and set $K \subseteq \R$, in this case we have \[ \M_{0,f,K}(d) = \min\left\{ dx : x=f(x),\ x\in\{-1,0,1\},\ x\in K \right\}. \] We claim that for every $f$ and $K$, there exists some $d$ such that \[ \argmin \M_{0,f,K}(d) \neq \Pi_0\!\left(\Sopt(d;\M^\star)\right). \] 
To see that, let \[ S \coloneqq \left\{ x: x=f(x),\ x\in\{-1,0,1\},\ x\in K \right\} \] be the feasible set of \(\M_{0,f,K}(d)\). Importantly, $S$ is independent of $d$, and hence $\M_{0,f,K}(d) = \min_{x\in S} dx.$

    Consider $d \in \{-2,0,2\}$. For these instances, the target optimal solutions are \[ \Sopt(-2;\M^\star)=\{1\}, \qquad \Sopt(0;\M^\star)=\{0\}, \qquad \Sopt(2;\M^\star)=\{-1\}. \] 

Suppose by contradiction that the optimal-solution sets of $\M_{0,f,K}(d)$ and $\M^\star(d)$ agree for all three values of $d$. Agreement at $d=-2$ requires 1 to be feasible for \(\M_{0,f,K}(-2)\), and therefore \(1\in S\). Similarly, agreement at \(d=2\) requires \(-1\in S\).  When \(d=0\), however, the objective of \(\M_{0,f,K}(0)\) is identically zero over \(S\), so every feasible point is optimal: \[ \argmin \M_{0,f,K}(0)=S. \] Since \(-1,1\in S\), it follows that \[ \{-1,1\} \subseteq \argmin \M_{0,f,K}(0). \] This contradicts $\Sopt(0;\M^\star)=\{0\}$. Therefore no such $f$ and $K$ exist.
\end{proof}

\section{Extended Numerical Results} \label{app:extended_results}

\subsection{Additional Results}

\begin{table}[h]
\setlength{\tabcolsep}{4pt} 
\caption{Test optimality rate per misspecification type, with DeepSeek-V4-Flash as LLM backbone.}
\label{tab:methods-misspec}
\centering
\footnotesize
\begin{tabular}{lccccccccc}
\toprule
 & \multicolumn{2}{c}{Objective} & \multicolumn{3}{c}{Constraints} & \multicolumn{2}{c}{Variables} & Constants & \\
\cmidrule(lr){2-3} \cmidrule(lr){4-6} \cmidrule(lr){7-8} \cmidrule(lr){9-9}
 & Miscal. & Missing & Local & Global & Extra & Missing & Domain & Missing & All \\
Instances & \textit{7} & \textit{6} & \textit{10} & \textit{15} & \textit{5} & \textit{10} & \textit{2} & \textit{16} & \textit{38} \\
\midrule
Initial model & 0.24 & 0.22 & 0.20 & 0.11 & 0.17 & 0.18 & 0.01 & 0.13 & 0.17 \\
\midrule
Calib. only & 0.49 & 0.26 & 0.20 & 0.11 & 0.28 & 0.21 & 0.01 & 0.13 & 0.23 \\
Cuts & 0.52 & 0.20 & 0.41 & 0.07 & 0.40 & 0.12 & 0.00 & 0.06 & 0.27 \\
Cuts + contextualize & 0.52 & 0.22 & 0.43 & 0.11 & 0.41 & 0.13 & 0.00 & 0.08 & 0.30 \\
Template & 0.59 & 0.37 & 0.29 & 0.24 & 0.50 & 0.31 & \textbf{1.00} & 0.17 & 0.39 \\
Template + complete & 0.60 & \textbf{0.46} & 0.55 & \textbf{0.49} & \textbf{0.77} & 0.43 & \textbf{1.00} & \textbf{0.51} & 0.58 \\
All & \textbf{0.61} & 0.43 & \textbf{0.72} & 0.47 & 0.68 & \textbf{0.44} & \textbf{1.00} & \textbf{0.51} & \textbf{0.61} \\
\bottomrule
\end{tabular}
\end{table}

\begin{table}[h]
\caption{Recovery rate of the final formulation against the number of training instances per scenario.}
\label{tab:sizes}
\centering
\footnotesize
\begin{tabular}{lcccc}
\toprule
Training instances & 32 & 64 & 128 & 256 \\
\midrule
Calibration & 4.7\% & 5.8\% & 5.3\% & 6.3\% \\
Cuts + contextualize & 7.9\% & 13.7\% & 17.9\% & 20.5\% \\
Template + complete & \textbf{36.8\%} & 43.2\% & 41.1\% & 40.5\% \\
All & 36.3\% & \textbf{45.8\%} & \textbf{45.3\%} & \textbf{43.2\%} \\
\bottomrule
\end{tabular}
\end{table}

Table~\ref{tab:subopt} compares our framework trained on optimal targets with training on targets that are up to 5\% suboptimal. Let $Z^\star(d)$ be the optimal value of $\mathcal{M}^\star(d)$ and, for a solution $y$, let $Z(y;d)=c^\star(d)^\top y$. For each instance whose target differs from the initial solution, we draw $x \sim U(0,\Delta)$ with $\Delta=0.05$ and replace the target by a maximizer of $Z(y;d)$ over $\{y\in\mathcal{X}^\star(d): Z(y;d)\le Z^\star(d)+x|Z^\star(d)|\}$, i.e., the worst target-feasible solution within relative gap $x$. When $x^0$ admits a feasible lifting under $\mathcal{M}^\star(d)$ with best objective $Z_{\text{init}}(d)$, we add $Z(y;d)\le Z_{\text{init}}(d)-\delta|Z^\star(d)|$ with $\delta=0.005$, so the new target still beats the initial solution by a relative margin of $\delta$ (instances where this set is empty are discarded).

\begin{table}[h]
\caption{Performance of \textsc{TACIT} with DeepSeek-V4-Flash backbone on datasets with suboptimal target solutions.}
\label{tab:subopt}
\centering
\footnotesize
\begin{tabular}{lccccccc}
\toprule
 & \multicolumn{3}{c}{Training} & \multicolumn{4}{c}{Test} \\
\cmidrule(lr){2-4} \cmidrule(lr){5-8}
Dataset & $\L$ $\downarrow$ & feas. $\uparrow$ & opt. $\uparrow$ & feas. $\uparrow$ & opt. $\uparrow$ & rec. \% $\uparrow$ & recoverable $\uparrow$ \\
\midrule
Optimal targets & 0.19 & 0.79 & 0.64 & 0.78 & 0.61 & 45.3\% & 63.2\% \\
Suboptimal targets & 0.19 & 0.81 & 0.58 & 0.79 & 0.53 & 40.5\% & 52.6\% \\
\bottomrule
\end{tabular}
\end{table}

The Adapted Optimization Problem \citep{hewitt2020data, bayani2024learning} learns a mapping from the initial model's optimum to the observed plan and penalizes deviations from that prediction, so it can only imitate the observed plans rather than recover the rule that produced them. In Table~\ref{tab:AOP}, raising $c$ makes its solutions satisfy the hidden constraint more often (feasibility from 0.51 to 0.62 and 0.52 to 0.64) but only by pulling them toward typical plans, so they stop being optimal (optimality from 0.13 to 0.01 and 0.15 to 0.04) and no run ever recovers the ground truth.

\begin{table}[h]
\caption{AOP methods across different values of the penalty parameter $c$.}
\centering
\label{tab:AOP}
\footnotesize
\setlength{\tabcolsep}{4pt}
\begin{tabular}{llcccccccc}
\toprule
 & & \multicolumn{4}{c}{Training} & \multicolumn{4}{c}{Test} \\
\cmidrule(lr){3-6} \cmidrule(lr){7-10}
 & Model & $\L$ $\downarrow$ & sep. $\uparrow$ & feas. $\uparrow$ & opt. $\uparrow$ & feas. $\uparrow$ & opt. $\uparrow$ & rec. \% $\uparrow$ & recoverable $\uparrow$ \\
\midrule
 & Initial & \textbf{0.85} & 0.0\% & 0.46 & \textbf{0.17} & 0.46 & \textbf{0.17} & - & - \\
 \addlinespace
 & AOP linear ($c{=}1$) & 0.90 & 0.0\% & 0.50 & 0.15 & 0.51 & 0.13 & 0.0\% & 0.0\% \\
 & AOP linear ($c{=}16$) & 0.94 & 0.0\% & 0.57 & 0.07 & 0.57 & 0.04 & 0.0\% & 0.0\% \\
 & AOP linear ($c{=}256$) & 0.98 & 0.0\% & 0.61 & 0.02 & 0.62 & 0.01 & 0.0\% & 0.0\% \\
  \addlinespace
 & AOP tree ($c{=}1$) & 0.91 & 0.0\% & 0.52 & \textbf{0.17} & 0.52 & 0.15 & 0.0\% & 0.0\% \\
 & AOP tree ($c{=}16$) & 0.93 & 0.0\% & 0.58 & 0.10 & 0.57 & 0.07 & 0.0\% & 0.0\% \\
 & AOP tree ($c{=}256$) & 0.95 & 0.0\% & \textbf{0.65} & 0.07 & \textbf{0.64} & 0.04 & 0.0\% & 0.0\% \\
\bottomrule
\end{tabular}
\end{table}

\subsection{Example Repaired Formulation}

\subsubsection{Bottom-Up}

Table~\ref{tab:cut-examples} shows two cases where refining a cut recovers the missing constraint exactly. In GPU Scenario 3, the raw cut has near-degenerate coefficients. Because the variables $z_i$ are binary, it still encodes the implication $z_0 = 1 \Rightarrow z_i = 1$ for all $i>0$. LLM rewrites it as the family in the more interpretable form $z_0 \leq z_i$ for all $i > 0$. In contrast, the template-based search never recovers a correct formulation on this scenario. In WA Scenario 3, the raw cut is already interpretable but applies to a single day and is not sufficient for out-of-sample test instances, so that raw cuts reach a test optimality of $0$. The LLM refinement keeps employees $3$ and $5$ as fixed anchors, extends the cut over the day index $d$, and recovers the ground-truth constraint exactly.

\begin{table}[h]
\caption{Example of cut processing on two scenarios.}
\label{tab:cut-examples}
\centering
\small
\renewcommand{\arraystretch}{1.5}
\setlength{\tabcolsep}{6pt}
\begin{tabular}{@{}l l@{}}
\toprule
\multicolumn{2}{@{}l}{\textbf{GPU Scenario 3:} \textit{demand 0 can only be satisfied whenever all other demands are satisfied.}} \\
Missing constraint: & $z_0 \leq z_i, \ \forall i \in I, \ i > 0$ \\
Raw cut from \eqref{eq:cut}:            & $0.9999\, z_0 - 0.0001\, z_1 - 0.0001\, z_2 - 0.9996\, z_3 - 0.0001\, z_4 \leq 0$ \\
LLM refined cut:       & $z_0 \leq z_i, \ \forall i \in I, \ i > 0$ \\
\midrule
\multicolumn{2}{@{}l}{\textbf{WA Scenario 3:} \textit{employees 3 and 5 can never work on the same day.}} \\
Missing constraint: & $x_{3,d} + x_{5,d} \leq 1, \ \forall d \in D$ \\
Raw cut from \eqref{eq:cut}:            & $0.5\, x_{3,7} + 0.5\, x_{5,7} - 0.5 \leq 0$ \\
LLM refined cut:    & $x_{3,d} + x_{5,d} \leq 1, \ \forall d \in D$ \\
\bottomrule
\end{tabular}
\end{table}

\subsubsection{Top-Down}

Table~\ref{tab:template-examples} shows three templates proposed by the LLM that recover the ground truth, and how the fitting step turns each into a concrete correction. In VRP Scenario 2, the template adds a variable $z$ that bounds every route cost. It has no free constants so no fitting in necessary. In TP Scenario 3, the template adds both a new integer variable $y_{i,j}$ and a new constant, the batch size $L$. The constraint calibration \eqref{eq:constraint-calibration} recovers the true value $L = 2$. In CFL Scenario 7, the LLM proposes a general per-pair cap $x_{i,j} \leq u_{i,j}\, y_i$ without knowing which facility is restricted. Here, the constraint calibration \eqref{eq:constraint-calibration} fit the constants $u_{i,j}$ to capture the local rule: every cap is set to $1$, which leaves the original constraint unchanged, except for facility $4$, whose cap is set to $0.5$.

\begin{table}[h]
\caption{Example of template processing on three scenarios. Templates proposed by the LLM, before (raw) and after fitting the new constants with \eqref{eq:constraint-calibration}. Fitted values are shown in blue.}
\label{tab:template-examples}
\centering
\small
\renewcommand{\arraystretch}{1.5}
\setlength{\tabcolsep}{6pt}
\begin{tabular}{@{}l c c@{}}
\toprule
 & \textbf{Template (raw)} & \textbf{Template (fitted)} \\
\midrule
\multicolumn{3}{@{}l}{\textbf{CVRP Scenario 2:} \textit{the ground truth minimizes the longest route instead of the total distance.}} \\
Variables:   & $z \in \mathbb{R}_{\geq 0}$ & $z \in \mathbb{R}_{\geq 0}$ \\
Constants:   & -- & -- \\
Constraints: & $\displaystyle\sum_{i \in N} \sum_{\substack{j \in N \\ j \neq i}} c_{i,j}\, x_{i,j,k} \leq z, \ \forall k \in K$
            & $\displaystyle\sum_{i \in N} \sum_{\substack{j \in N \\ j \neq i}} c_{i,j}\, x_{i,j,k} \leq z, \ \forall k \in K$ \\
\midrule
\multicolumn{3}{@{}l}{\textbf{TP Scenario 3:} \textit{goods are shipped in batches of a fixed size.}} \\
Variables:   & $y_{i,j} \in \mathbb{Z}, \ \forall i \in I, j \in J$ & $y_{i,j} \in \mathbb{Z}, \ \forall i \in I, j \in J$ \\
Constants:   & $L$ & $L = {\color{blue}2}$ \\
Constraints: & $x_{i,j} = L\, y_{i,j}, \ \forall i \in I, j \in J$
            & $x_{i,j} = {\color{blue}2}\, y_{i,j}, \ \forall i \in I, j \in J$ \\
\midrule
\multicolumn{3}{@{}l}{\textbf{CFL Scenario 7:} \textit{facility 4 can serve at most half of the demand of any customer.}} \\
Variables:   & -- & -- \\
Constants:   & $u_{i,j}, \ \forall i \in I, j \in J$
            & $u_{4,j} = {\color{blue}0.5}, \ u_{i,j} = {\color{blue}1} \ (i \neq 4)$ \\
Constraints: & $x_{i,j} \leq u_{i,j}\, y_i, \ \forall i \in I, j \in J$
            & $x_{4,j} \leq {\color{blue}0.5}\, y_4, \ \forall j \in J$ \\
            & & $x_{i,j} \leq y_i, \ \forall i \in I \setminus \{4\}, j \in J$ \\
\bottomrule
\end{tabular}
\end{table}

\subsubsection{Final formulation samples} \label{sec:final_form_samples}

In these examples, $\mathcal{M}'$ is the final formulation returned by TACIT.

\paragraph{TP Scenario 2.}
At most $K=2$ origins may ship to a given destination. Here \textsc{TACIT} recovers the target formulation exactly, up to the big-$M$ constant ($24$ instead of $M=1000$):

\vspace{-0.6cm}
\begin{center}
\scalebox{0.85}{\begin{minipage}[t]{0.48\textwidth} \centering \[ \begin{aligned} \mathcal M^\star:\quad \min_{x,z}\quad & \sum_{i\in I}\sum_{j\in J} c_{ij}x_{ij}\\ \text{s.t.}\quad & x\in\mathcal X_{\mathrm{TP}},\\ & \blue{x_{ij} \le M z_{ij},} && \blue{\forall i\in I,\ j\in J,}\\ & \blue{\sum_{i\in I} z_{ij} \le K,} && \blue{\forall j\in J,}\\ & \blue{z_{ij}\in\{0,1\},} && \blue{\forall i\in I,\ j\in J.} \end{aligned} \] \end{minipage}
\rule[-3.8cm]{0.4pt}{3.8cm}
\hspace{0.1cm}
\begin{minipage}[t]{0.48\textwidth} \centering \[ \begin{aligned} \mathcal M':\quad \min_{x,y}\quad & \sum_{i\in I}\sum_{j\in J} c_{ij}x_{ij}\\ \text{s.t.}\quad & x\in\mathcal X_{\mathrm{TP}},\\ & \green{x_{ij} \le 24\, y_{ij},} && \green{\forall i\in I,\ j\in J,}\\ & \green{\sum_{i\in I} y_{ij} \le 2,} && \green{\forall j\in J,}\\ & \green{y_{ij}\in\{0,1\},} && \green{\forall i\in I,\ j\in J.} \end{aligned} \] \end{minipage} }
\end{center}

\paragraph{GPU Scenario 4.}
The target values the total demand fulfilled rather than the number of requests fulfilled. Since $\mathcal X_{\mathrm{GPU}}$ contains $\sum_{j\in J_i} x_{ij} = s_i z_i$, the incumbent objective is equivalent to $-4.32\sum_{i\in I} s_i z_i - 0.10\sum_{i\in I} z_i$, i.e. the target objective scaled by $4.32$ plus a small tie-breaker on the number of requests.
\begin{center}
\scalebox{0.85}{\begin{minipage}[t]{0.40\textwidth} \centering \[ \begin{aligned} \mathcal M^\star:\quad \min_{x,y,z}\quad & -\sum_{i\in I} \blue{s_i}\, z_i\\ \text{s.t.}\quad & (x,y,z)\in\mathcal X_{\mathrm{GPU}}. \end{aligned} \] \end{minipage}
\rule[-2.2cm]{0.4pt}{1.8cm}
\hspace{0.1cm}
\begin{minipage}[t]{0.60\textwidth} \centering \[ \begin{aligned} \mathcal M':\quad \min_{x,y,z}\quad & \green{-1.90\sum_{i\in I} s_i z_i - 0.10\sum_{i\in I} z_i} \green{ - 2.42\sum_{i\in I}\sum_{j\in J_i} x_{ij}}\\ \text{s.t.}\quad & (x,y,z)\in\mathcal X_{\mathrm{GPU}}. \end{aligned} \] \end{minipage} }
\end{center}

\paragraph{MCF Scenario 2.}
Minimize the number of used arcs. Here TACIT also recovers precisely an equivalent formulation (we note that $f \in \mathcal X_{\mathrm{MCF}}$ guarantees that $f \ge 0$, and so the last inequality in $\mathcal{M}'$ is valid, albeit redundant):
\begin{center}
\scalebox{0.85}{\begin{minipage}[t]{0.48\textwidth} \centering \[ \begin{aligned} \mathcal M^\star:\quad \min_{f,w}\quad & \blue{\sum_{i\in V}\sum_{j\in O_i} w_{ij}}\\ \text{s.t.}\quad & f\in\mathcal X_{\mathrm{MCF}},\\ & \blue{f_{ij} \le u_{ij} w_{ij},} && \blue{\forall i\in V,\ j\in O_i,}\\ & \blue{w_{ij}\in\{0,1\},} && \blue{\forall i\in V,\ j\in O_i.} \end{aligned} \] \end{minipage}
\hspace{0.1cm}
\rule[-3.3cm]{0.4pt}{3cm}
\hspace{0.1cm}
\begin{minipage}[t]{0.48\textwidth} \centering \[ \begin{aligned} \mathcal M':\quad \min_{f,y}\quad & \green{\sum_{i\in V}\sum_{j\in O_i} y_{ij}}\\ \text{s.t.}\quad & f\in\mathcal X_{\mathrm{MCF}},\\ & \green{f_{ij} \le u_{ij} y_{ij},} && \green{\forall i\in V,\ j\in O_i,}\\ & \green{0\cdot y_{ij} \le f_{ij},} && \green{\forall i\in V,\ j\in O_i,}\\ & \green{y_{ij}\in\{0,1\},} && \green{\forall i\in V,\ j\in O_i.} \end{aligned} \] \end{minipage} }
\end{center}

\section{Implementation Details} \label{app:implementation_details}

\subsection{Routine and Subroutines} \label{app:subroutines}

Algorithms~\ref{alg:full_framework} to \ref{alg:templates} summarize the framework of Section~\ref{sec:methodology}. We write $\M \oplus \cdot$ for the formulation obtained by adding constraints, variables, or an instantiated template to $\M$, and $\mathcal{C}$ for the cuts accumulated across iterations. A single incumbent $\M'$ is maintained: every candidate formulation is scored by \textsc{Evaluate}, which compares it to the incumbent before and after objective calibration and keeps the best. Cuts are searched on $\M' \oplus \mathcal{C}$ so that successive calls separate the solutions surviving the previous cuts, while templates are proposed from $\M_0$ to avoid compounding noise from previous modifications.

\begin{algorithm}[h]
\caption{\frameworkName Framework}
\label{alg:full_framework}
\begin{algorithmic}[1]
\REQUIRE initial formulation $\M_0$, data $\D$, iteration budget $T$
\STATE $\M' \gets \M_0$ with every constraint violated by some $x_i^\star$ removed \hfill\emph{(preprocessing)}
\STATE $\M' \gets$ \textsc{Evaluate}$(\M', \M')$
\STATE $\mathcal{C} \gets \emptyset$
\FOR{$t = 1, \dots, T$ \textbf{while} $\L(\M';\D) > 0$}
    \STATE $\M', \mathcal{C} \gets$ \textsc{SearchCuts}$(\M', \mathcal{C})$ \hfill\emph{(Sec.~\ref{sec:bottom-up})}
    \STATE $\M' \gets$ \textsc{SearchTemplates}$(\M', \M_0)$ \hfill\emph{(Sec.~\ref{sec:top-down})}
\ENDFOR
\RETURN $\M'$
\end{algorithmic}
\end{algorithm}

\begin{algorithm}[h]
\caption{\textsc{Evaluate}$(\M', \M)$: score a candidate $\M$ and update the incumbent $\M'$}
\label{alg:evaluate}
\begin{algorithmic}[1]
\IF{some $x_i^\star$ has no lifting under $\M$}
    \RETURN $\M'$
\ENDIF
\IF{$\L(\M;\D) < \L(\M';\D)$}
    \STATE $\M' \gets \M$
\ENDIF
\STATE $\theta \gets$ solution of \eqref{eq:objective-calibration} on $\M$ \hfill\emph{(Sec.~\ref{sec:objective-calibration})}
\STATE $\M_\theta \gets (c_\theta, \X)$
\IF{$\L(\M_\theta;\D) < \L(\M';\D)$}
    \STATE $\M' \gets \M_\theta$
\ENDIF
\RETURN $\M'$
\end{algorithmic}
\end{algorithm}

\begin{algorithm}[h]
\caption{\textsc{SearchCuts}$(\M', \mathcal{C})$: cut search and LLM refinement}
\label{alg:cuts}
\begin{algorithmic}[1]
\STATE $\M \gets \M' \oplus \mathcal{C}$
\FOR{$\rho$ in $\{2^7, \dots , 2^{-7}\}$}
    \STATE $\{(a^{(0)}, b^{(0)})\} \gets$ solution of the cut MIP on $\M$ with penalty $\rho$ \hfill\emph{(App.~\ref{app:cuts})}
    \STATE $\{(a^{(k)}, b^{(k)})\}_{k \ge 1} \gets$ LLM-polished variants of $(a^{(0)}, b^{(0)})$ given $\M$
    \FOR{$k = 0, 1, \dots$}
        \STATE $\M' \gets$ \textsc{Evaluate}$\big(\M',  \M \oplus \{a^{(k)\top} x \le b^{(k)}\}\big)$
        \STATE $\mathcal{C} \gets \mathcal{C} \cup \{a^{(k)\top} x \le b^{(k)}\}$
    \ENDFOR
\ENDFOR
\STATE $\M' \gets$ \textsc{Evaluate}$(\M', \M' \oplus \mathcal{C})$
\FOR{each constraint family $\mathcal{G}$ returned by the LLM as an extrapolation of $\mathcal{C}$}
    \STATE $\M' \gets$ \textsc{Evaluate}$(\M', \M' \oplus \mathcal{G})$
\ENDFOR
\RETURN $\M'$
\end{algorithmic}
\end{algorithm}

\begin{algorithm}[h]
\caption{\textsc{SearchTemplates}$(\M', \M)$: template generation and calibration}
\label{alg:templates}
\begin{algorithmic}[1]
\STATE $\pi \gets$ sample $n_\text{pairs}$ mismatched pairs $(\hat x_i, x_i^\star)$ with $\hat x_i \in \Pi_0(\Sopt(d_i;\M))$, $\hat x_i \ne x_i^\star$
\STATE $\{\tau_1, \dots, \tau_m\} \gets$ templates returned by the LLM given $\M$ and $\pi$
\FOR{$j = 1, \dots, m$}
    \IF{\eqref{eq:constraint-calibration} is infeasible}
        \STATE continue
    \ENDIF
    \STATE $w \gets$ solution of \eqref{eq:constraint-calibration} on $\M \oplus \tau_j$
    \STATE $\M' \gets$ \textsc{Evaluate}$(\M', \M \oplus \tau_j(w))$
\ENDFOR
\RETURN $\M'$
\end{algorithmic}
\end{algorithm}

\subsection{Hyperparameters} \label{app:hyperparameters}

Each run uses $N = 128$ training instances unless stated otherwise and is evaluated on 256 test instances. We use rejection sampling to ensure that at least half of target solutions differ from the initial ones. The search runs for at most 5 iterations and stops early once the training loss is optimal. Cuts are generated by the hyperplane-separation MIP with an $\ell_0$ penalty starting at 128 and halved until non-empty cuts are found. Template prompts show up to 16 model/target solution pairs, the constraints removed during preprocessing, and up to 5 templates that were returned by the LLM on previous iterations to avoid repetitions. All MIPs are solved with Gurobi on a single thread with a 30-second limit per instance. LLMs are queried with high reasoning effort and default temperature.

\ifdefined\isarxiv
\else

\subsection{Prompt Templates} \label{app:prompts}

We report the prompts verbatim. Fields between single angle brackets (e.g.\texttt{<skeleton\_str>}) are filled at run time; tags between double angle brackets (e.g.\ \texttt{<<START\_MODIFICATION>>}) are the delimiters used to parse the response. The current formulation (\texttt{<skeleton\_str>}) is always given as gurobipy-style pseudo-code.

\paragraph{Cut refinement.} Each raw cut returned by the separation MIP has fitted numerical coefficients. The LLM rewrites it into simpler, interpretable variants (rounded coefficients, dropped terms, extension to a family), which are then evaluated like any other candidate cut.
\begin{promptbox}{Polish cut}
\VerbatimInput{assets/prompts_txt/polish_cut.txt}
\end{promptbox}

\paragraph{Cut generalization.} Given up to 16 cuts found so far (sparse cuts are prioritized), the LLM abstracts their common pattern into constraint families over the existing sets, using only existing symbols.
\begin{promptbox}{Generalize cuts}
\VerbatimInput{assets/prompts_txt/generalize_cuts.txt}
\end{promptbox}

\paragraph{Template proposal.} Given the initial formulation and a set of instances where its solutions differ from the targets (only the differing variables are shown), the LLM proposes modifications that may introduce new symbolic constants, whose values are then set by calibration. The prompt also lists the constraints removed by preprocessing and the most recent modifications, to avoid repeating them.
\begin{promptbox}{Get templates}
\VerbatimInput{assets/prompts_txt/get_templates.txt}
\end{promptbox}

\paragraph{Parsing and translation.} Three additional prompts, omitted here for brevity, convert the free-text modifications and constraints returned above into executable code: one splits a modification into its new constants, variables, type changes, and constraints, the two other translate each constraint and variable into our structured representation using few-shot examples. They perform a syntactic translation only and will be available once the code is released.

\paragraph{LLM-only baseline.} The LLM receives the initial formulation as compilable gurobipy code, the solution differences on all instance pairs, and the data format, and returns a complete corrected gurobipy model in a single query.
\begin{promptbox}{Get formulation (LLM-only baseline)}
\VerbatimInput{assets/prompts_txt/get_formulation.txt}
\end{promptbox}

\fi

\subsection{Compute and runtime} \label{app:runtime}

Runs are parallelized on an AMD EPYC 9734 processor, one run per core with Gurobi restricted to a single thread. LLMs are queried through hosted APIs with high reasoning effort. Each method is run 5 times per scenario, each run on a dataset generated with a different random seed, for at most 5 iterations and under a time budget of 6 hours per run. For datasets with $N{=}128$ instances, the median runtime per run ranges from 2 min for calibration alone to 0.8 hours for template, 1.3 hour for cuts and refine, and 2.1 hours for the full framework. The runtime of the LLM-only baseline is 0.2 hour for the LLM-only baseline.

We use two models in our framework: the large model, reported in the main text (DeepSeek-V4-Flash, Kimi-K2.6 or gpt-5.6-sol), generates templates and refine cuts, while the small model (always DeepSeek-V4-Flash) handles simpler tasks: split templates into variables, constants, and constraints, and translating natural-language modifications into optimization code. Averaged across the three large models, a run of our framework consumes 154k tokens from the large model and 346k tokens from the small model. The LLM-only baseline uses on average 210k tokens per run, all from the large model.

\section{Benchmark Applications and Scenarios}\label{app:problems_and_scenarios}

\ifdefined\isarxiv

The benchmark consists of 38 scenarios built on nine base problems. Each
scenario is a pair of formulations: the initial model used as the starting point, and the target model that generated the observed solutions.
Table~\ref{tab:scenario-taxonomy} classifies what separates the two. Below,
every problem is introduced with its notation, its base formulation and how
instances are drawn, and its scenarios are then listed. Unless we say
otherwise, the initial model is the base formulation and the target model is
the base formulation with the stated change. Every model is written as a
minimization, so maximization problems appear with a negated objective.
Scenarios that single out specific indices (e.g. facility 4, or employees 3 and 5) are run on instances of fixed size so that these indices always exist.
Numerical constants introduced by a target model ($R$, $L$, $K$, \dots) are
hidden from the repair method, which only sees the initial model and the
solutions.

\subsection{Capacitated facility location (CFL)}
\label{app:cfl}

Facilities are opened at a fixed cost and customers are assigned to open
facilities, possibly splitting their demand. Let $I$ be the facilities and $J$
the customers. Customer $j$ has demand $d_j$, facility $i$ has capacity $C_i$
and opening cost $f_i$, and serving the whole demand of $j$ from $i$ costs
$c_{ij}$. Variable $y_i\in\{0,1\}$ opens facility $i$ and $x_{ij}\in[0,1]$ is
the fraction of $j$'s demand served by $i$.
\begin{align}
    \min\quad & \sum_{i\in I} f_i y_i + \sum_{i\in I}\sum_{j\in J} c_{ij}x_{ij}
      \label{eq:cfl-obj}\\
    \text{s.t.}\quad
    & \sum_{i\in I}x_{ij}\ge 1, && \forall j\in J, \label{eq:cfl-demand}\\
    & \sum_{j\in J}d_jx_{ij}\le C_i y_i, && \forall i\in I, \label{eq:cfl-cap}\\
    & 0\le x_{ij}\le 1,\ y_i\in\{0,1\}, && \forall i\in I,\ j\in J.
\end{align}
Instances have 6 to 8 facilities and 8 to 14 customers, $d_j$ uniform in
$[1,20]$, $f_i$ in $[50,250]$ and $c_{ij}$ in $[1,100]$. Capacities are
$C_i=\lceil \alpha_i \bar d\rceil$ with $\bar d$ the total demand per facility
and $\alpha_i$ uniform in $[1.5,2.5]$, so that roughly half of the facilities
need to open.
\begin{itemize}
    \item \textbf{S1: service radius.}
    A facility can only serve customers within distance $R=50$. The target adds
    $z_{ij}\in\{0,1\}$ with $x_{ij}\le z_{ij}$ and $c_{ij}z_{ij}\le R$ for all
    $i,j$.
    \item \textbf{S2: overestimated capacity.}
    The target replaces~\eqref{eq:cfl-cap} by the tighter
    $\sum_j d_jx_{ij}\le 0.8\,C_i y_i$. The initial model uses the full $C_i$.
    \item \textbf{S3: underestimated capacity.}
    The target allows $\sum_j d_jx_{ij}\le 1.2\,C_i y_i$, so the capacity
    constraint of the initial model cuts off target solutions.
    \item \textbf{S4-S7:} Will be released with the open-source version of the benchmark.
\end{itemize}

\subsection{Other problems}
The remaining 8 problems and their scenarios will be released with the open-source version of the benchmark.

\else
  
\fi

\begin{table}[h]
\caption{Taxonomy of scenario misspecifications over the 38 benchmark scenarios spanning 9 problems: CFL (capacitated facility location), GPU (GPU allocation), MIS (maximum independent set), MCF (minimum-cost flow), MPP (multi-period production planning), SC (set cover), TP (transportation), CVRP (capacitated vehicle routing), WA (workforce assignment). A constraint is local when the restriction it adds involves fixed indices (e.g. $\sum_j \left(x_{3,j} + x_{5,j}\right) \leq 1$) and global when it is stated over generic indices (e.g. $\sum_{i,j} x_{i,j} \leq 1$). Extra marks a constraint of the initial model that cuts off target-feasible or target-optimal solutions. Constraints that only define an auxiliary variable are counted under missing variable rather than as a missing constraint, and big-$M$ bounds are not counted as missing constants.}
\label{tab:scenario-taxonomy}
    \centering
    \footnotesize
\begin{tabular}{cccccccccc}
\toprule
& & \multicolumn{2}{c}{Objective} & \multicolumn{3}{c}{Constraints} & \multicolumn{2}{c}{Variables} & Constants \\
\cmidrule(lr){3-4} \cmidrule(lr){5-7} \cmidrule(lr){8-9} \cmidrule(lr){10-10}
& Scenario & Miscalibrated & Missing & Local & Global & Extra & Missing & Domain & Missing \\
\midrule
\multirow{7}{*}{\rotatebox[origin=c]{90}{CFL}}
& S1 & $\square$ & $\square$ & $\square$ & $\blacksquare$ & $\square$ & $\blacksquare$ & $\square$ & $\blacksquare$ \\
& S2 & $\square$ & $\square$ & $\square$ & $\blacksquare$ & $\square$ & $\square$ & $\square$ & $\blacksquare$ \\
& S3 & $\square$ & $\square$ & $\square$ & $\blacksquare$ & $\blacksquare$ & $\square$ & $\square$ & $\blacksquare$ \\
& S4 & $\blacksquare$ & $\square$ & $\square$ & $\square$ & $\square$ & $\square$ & $\square$ & $\square$ \\
& S5 & $\blacksquare$ & $\blacksquare$ & $\square$ & $\square$ & $\square$ & $\blacksquare$ & $\square$ & $\square$ \\
& S6 & $\square$ & $\square$ & $\square$ & $\square$ & $\square$ & $\square$ & $\blacksquare$ & $\square$ \\
& S7 & $\square$ & $\square$ & $\blacksquare$ & $\square$ & $\square$ & $\square$ & $\square$ & $\blacksquare$ \\
\midrule
\multirow{5}{*}{\rotatebox[origin=c]{90}{GPU}}
& S1 & $\square$ & $\blacksquare$ & $\square$ & $\square$ & $\square$ & $\blacksquare$ & $\square$ & $\square$ \\
& S2 & $\square$ & $\square$ & $\square$ & $\blacksquare$ & $\square$ & $\square$ & $\square$ & $\square$ \\
& S3 & $\square$ & $\square$ & $\blacksquare$ & $\square$ & $\square$ & $\square$ & $\square$ & $\square$ \\
& S4 & $\blacksquare$ & $\square$ & $\square$ & $\square$ & $\square$ & $\square$ & $\square$ & $\square$ \\
& S5 & $\square$ & $\square$ & $\square$ & $\blacksquare$ & $\square$ & $\square$ & $\square$ & $\blacksquare$ \\
\midrule
\multirow{3}{*}{\rotatebox[origin=c]{90}{MIS}}
& S1 & $\square$ & $\square$ & $\square$ & $\square$ & $\blacksquare$ & $\square$ & $\square$ & $\square$ \\
& S2 & $\square$ & $\square$ & $\blacksquare$ & $\square$ & $\square$ & $\square$ & $\square$ & $\square$ \\
& S3 & $\square$ & $\square$ & $\square$ & $\blacksquare$ & $\square$ & $\blacksquare$ & $\square$ & $\blacksquare$ \\
\midrule
\multirow{2}{*}{\rotatebox[origin=c]{90}{MCF}}
& S1 & $\square$ & $\square$ & $\square$ & $\blacksquare$ & $\blacksquare$ & $\square$ & $\square$ & $\square$ \\
& S2 & $\blacksquare$ & $\blacksquare$ & $\square$ & $\square$ & $\square$ & $\blacksquare$ & $\square$ & $\square$ \\
\midrule
\multirow{2}{*}{\rotatebox[origin=c]{90}{MPP}}
& S1 & $\blacksquare$ & $\square$ & $\square$ & $\square$ & $\square$ & $\square$ & $\square$ & $\square$ \\
& S2 & $\square$ & $\square$ & $\square$ & $\blacksquare$ & $\square$ & $\square$ & $\square$ & $\blacksquare$ \\
\midrule
\multirow{5}{*}{\rotatebox[origin=c]{90}{SC}}
& S1 & $\square$ & $\square$ & $\blacksquare$ & $\square$ & $\square$ & $\square$ & $\square$ & $\blacksquare$ \\
& S2 & $\square$ & $\square$ & $\square$ & $\square$ & $\blacksquare$ & $\square$ & $\square$ & $\square$ \\
& S3 & $\square$ & $\square$ & $\blacksquare$ & $\square$ & $\blacksquare$ & $\square$ & $\square$ & $\square$ \\
& S4 & $\blacksquare$ & $\square$ & $\square$ & $\square$ & $\square$ & $\square$ & $\square$ & $\square$ \\
& S5 & $\square$ & $\square$ & $\square$ & $\blacksquare$ & $\square$ & $\square$ & $\square$ & $\blacksquare$ \\
\midrule
\multirow{6}{*}{\rotatebox[origin=c]{90}{TP}}
& S1 & $\square$ & $\square$ & $\square$ & $\blacksquare$ & $\square$ & $\square$ & $\square$ & $\blacksquare$ \\
& S2 & $\square$ & $\square$ & $\square$ & $\blacksquare$ & $\square$ & $\blacksquare$ & $\square$ & $\blacksquare$ \\
& S3 & $\square$ & $\square$ & $\square$ & $\blacksquare$ & $\square$ & $\blacksquare$ & $\square$ & $\blacksquare$ \\
& S4 & $\square$ & $\blacksquare$ & $\square$ & $\square$ & $\square$ & $\blacksquare$ & $\square$ & $\blacksquare$ \\
& S5 & $\square$ & $\square$ & $\square$ & $\square$ & $\square$ & $\square$ & $\blacksquare$ & $\square$ \\
& S6 & $\square$ & $\square$ & $\blacksquare$ & $\square$ & $\square$ & $\square$ & $\square$ & $\square$ \\
\midrule
\multirow{4}{*}{\rotatebox[origin=c]{90}{CVRP}}
& S1 & $\square$ & $\square$ & $\square$ & $\blacksquare$ & $\square$ & $\square$ & $\square$ & $\blacksquare$ \\
& S2 & $\blacksquare$ & $\blacksquare$ & $\square$ & $\square$ & $\square$ & $\blacksquare$ & $\square$ & $\square$ \\
& S3 & $\square$ & $\square$ & $\blacksquare$ & $\square$ & $\square$ & $\square$ & $\square$ & $\square$ \\
& S4 & $\square$ & $\square$ & $\blacksquare$ & $\square$ & $\square$ & $\square$ & $\square$ & $\square$ \\
\midrule
\multirow{4}{*}{\rotatebox[origin=c]{90}{WA}}
& S1 & $\square$ & $\square$ & $\square$ & $\blacksquare$ & $\square$ & $\square$ & $\square$ & $\blacksquare$ \\
& S2 & $\square$ & $\blacksquare$ & $\square$ & $\blacksquare$ & $\square$ & $\blacksquare$ & $\square$ & $\blacksquare$ \\
& S3 & $\square$ & $\square$ & $\blacksquare$ & $\square$ & $\square$ & $\square$ & $\square$ & $\square$ \\
& S4 & $\square$ & $\square$ & $\blacksquare$ & $\square$ & $\square$ & $\square$ & $\square$ & $\square$ \\
\bottomrule
\end{tabular}
\end{table}

\section{Additional Details}

\subsection{Cut Search} \label{app:cuts}

\begin{definition}[Variable family]
    Let $\mathcal{K}_i(\M)$ be the set of indices of the decision variables of $\M(d_i)$, whose size may vary across instances. Each variable is identified by a symbol and a tuple of indices ranging over sets that are either shared by all instances or instance-dependent. The \emph{family} $t(k)$ of variable $k$ is obtained by dropping its indices with instance-dependent sets. We denote by $\mathcal{T}(\M)$ the set of families. In particular, when every instance shares the same index sets, each variable forms its own family.
\end{definition}

The intuition behind families is that weights learned from data must be defined at the family level: a weight attached to a family applies to every instance, whereas a weight attached to an individual variable is meaningless on instances where that variable does not exist.

For instance, consider the VRP example (Example~\ref{ex:VRP}) where there is a fixed fleet of 5 vehicles $\mathcal{L}$ for all instances, but the customer set $C$ varies across instances. The routing variable $x_{ij \ell}$ indicates vehicle $\ell$ travels from customer $i$ to $j$. A variable index is a tuple $k = (i,j,l) \in N \times N \times \{1,\dots,5\}$, whose customer components $i,j$ are instance-dependent and whose vehicle component $\ell$ is shared. In this case, there is one family per vehicle, $\{x_{ijl} : (i,j) \in N \times N\}$ for $l \in \{1, \dots, 5\}$. In particular, it is not possible to learn customer-dependent weights that apply to all instances.

Let $\varphi_k(d)$ be the elements of $d$ whose instance-dependent indices all appear among the indices of variable $k$. Elements indexed only by index sets identical across instances belong to every $\varphi_k(d)$, and we denote this common part by $\varphi_0(d)$. Define $\phi_k(d) = \big(1, \varphi_k(d)\big)$. A cut then reads $a(d)^\top x \le b(d)$ with $a(d)_k = \alpha_{t(k)}^\top \phi_k(d)$ and $b(d) = \beta^\top \phi_0(d)$, where $\alpha_{t(k)}$ and $\beta$ are weights to be inferred. We solve the following MILP to perform this inference and find cuts with sparsity controlled by a parameter $\rho$:
\begin{alignat}{2}
    \max_{\alpha, \beta, z, \eta} \quad & \frac{1}{|\I|} \sum_{i \in \I} z_i - \rho \left(\|\alpha\|_0 + \|\beta\|_0\right) + \epsilon \eta \label{eq:cut} \tag{$\mathcal{P}_\text{cut}$}\\
    \text{s.t.} \quad & a_i^\top x_i^\star - b_i \le 0, &\qquad& \forall i \in \I, \nonumber \\
    & a_i^\top \hat{x}_i - b_i \ge \eta - M (1 - z_i), && \forall i \in \I, \nonumber \\
    & a_{i,k} = \alpha_{t(k)}^\top \phi_k(d_i), &\qquad& \forall i \in \I, \; \forall k \in \mathcal{K}(d_i), \nonumber \\
    & b_i = \beta^\top \phi_0(d_i), &\qquad& \forall i \in \I, \nonumber \\
    & z \in \{0,1\}^{|\I|}, \quad \eta \ge \underline{\eta}, \quad \|\alpha\|_\infty \le 1, \quad \|\beta\|_\infty \le 1. \nonumber
\end{alignat}
The first constraint makes the cut valid for every target, and the binary $z_i$ certifies that $\hat{x}_i$ violates it by at least the margin $\eta \ge \underline{\eta} > 0$. The objective maximizes the share of solutions of $\M$ that are cut off, penalizes the number of nonzero weights $\|\alpha\|_0 + \|\beta\|_0$ (linearized with binaries $u_{i, k} \ge |\alpha_{i, k}|$ and $v_{i} \ge |\beta_i|$, which also yield the bound $\|\alpha\|_\infty \le 1$ and $\|\beta\|_\infty \le 1$) and breaks ties with a small reward on the margin with $0 < \epsilon \ll 1$. The penalty $\rho$ trades separation for sparsity: a $\rho$ close to 0 yields the densest and most separating cut, whereas a large $\rho$ yields cuts with few terms, which are more interpretable and less prone to overfitting. The cut search sweeps $\rho$ downward to collect cuts of increasing density, from the sparsest nonempty one to the unregularized one, and evaluates each by the separation loss.

\subsection{Template completion} \label{app:template_completion}

We calibrate the template weights $w$ to preserve all observed targets while maximally separating the optimal solutions of the current formulation $\M$. Preservation requires each $x_i^\star$ to admit a lifting $y_i^\star$ into the non-primary variables of $\M$ and an assignment of the new variables $z_i^\star$ such that $(y_i^\star,z_i^\star)$ belongs to the new feasible set $\X_\tau(d_i; w)$. 
For the second goal, define \[ V_\tau(y,z,w;d) \triangleq \sum_{j\in J_\tau(d)} [g_j(y,z,w;d)]^+, \] which measures the total violation of the template constraints by a solution $(y,z)$. Since an optimal solution $\hat y_i$ of $\M$ does not specify the new variables $z$, it is excluded by the template only if \emph{every} completion $\hat z_i$ violates its constraints. Thus, for a given $w$, its violation is measured by $ \min_{\hat z_i \in Z_\tau(d_i)} V_\tau(\hat y_i,\hat z_i,w;d_i), $ i.e., the violation under the completion of the new variables that best satisfies the template.

Combining these two requirements, we determine the template weights $w$ by  \begin{alignat}{2} \max_{w \in \mathcal{W},\,(y_i^\star,z_i^\star)_{i\in\I}} \quad & \sum_{i\in\I} \min_{\hat z_i\in Z_\tau(d_i)} V_\tau(\hat y_i,\hat z_i,w;d_i) \label{eq:constraint-calibration} \tag{$\mathcal{P}_{\mathrm{constr}}$}\\ \text{s.t.} \quad & (y_i^\star,z_i^\star)\in \X_\tau(d_i;w), && \forall i\in\I, \label{eq:CC_feasible_target_lift}\\ & \Pi_0(y_i^\star)=x_i^\star, && \forall i\in\I. \label{eq:CC_is_lift} \end{alignat} 
For scale-invariant templates, we normalize the corresponding components of $w$. Problem~\eqref{eq:constraint-calibration} has a max-min structure, jointly calibrating the template parameters $w$ against the best completions $\hat z_i$ of the new variables. We solve this problem via constraint generation and discard any template where no weights allow every target to be feasibly lifted.

\subsection{Objective Calibration} \label{app:obj_calibration}

As with cuts, we parameterize objective coefficients as affine functions of instance data, with one weight vector per variable family. The objective coefficient of variable $k$ is $c_\theta(d_i)_k = \theta_{t(k)}^\top \phi_k(d_i)$, where $\phi_k(d_i)$ denotes data associated with variable $k$ and $\theta_{t(k)}$ is the vector to be learned for family $t(k)$.

We seek weights $\theta$ that balance two goals. First, each target solution $x_i^\star$ should admit a feasible lifting $y_i^\star$ that is nearly optimal under the calibrated objective. We allow some suboptimality to accommodate potentially suboptimal target solutions in practice. Second, the lifted target $y_i^\star$ should outperform the current model solution $\hat y_i$ by as large a margin as possible; equivalently, we seek an objective vector $c_\theta(d_i)$ for which $c_\theta(d_i)^\top(\hat y_i-y_i^\star)$ is large. We therefore calibrate $\theta$ by solving 
\begin{alignat}{2} \min_{\theta \in \Theta,\, (y_i^\star,\e_i)_{i \in \I}} \quad & \sum_{i \in \I} \e_i - \lambda \sum_{i \in \I} c_\theta(d_i)^\top(\hat y_i-y_i^\star) \label{eq:objective-calibration} \tag{$\mathcal{P}_{\mathrm{obj}}$} \\ \text{s.t.}\quad & c_\theta(d_i)^\top(y_i^\star-y) \leq \e_i && \forall i \in \I,\ \forall y \in \X(d_i), \label{eq:OC-subopt} \\ & y_i^\star \in \X(d_i), \qquad \Pi_0(y_i^\star)=x_i^\star && \forall i \in \I, \\ & \e_i \geq 0 && \forall i \in \I . 
\end{alignat}
At an optimum, $\e_i = c_\theta(d_i)^\top y_i^\star - \min_{y\in\X(d_i)} c_\theta(d_i)^\top y \geq 0,$ so $\epsilon_i$ precisely captures the suboptimality of the lifted target solution under the calibrated objective. The parameter $\lambda>0$ controls the tradeoff between target suboptimality and separation from the current model solutions. As with template calibration, we solve Problem~\eqref{eq:objective-calibration} via constraint generation.

\end{document}